\newtheorem*{theorem*}{Theorem}
\newtheorem{theorem}{Theorem}
\newtheorem*{lemma*}{Lemma}
\newtheorem{lemma}[theorem]{Lemma}
\newtheorem{problem}[theorem]{Problem}
\newtheorem*{proposition*}{Proposition}
\newtheorem*{fact*}{Fact}
\newtheorem*{question*}{Question}
\newtheorem{conjecture}[theorem]{Conjecture}
\newtheorem*{corollary*}{Corollary}
\newtheorem{corollary}[theorem]{Corollary}
\newtheorem*{claim*}{Claim}
\newtheorem{claim}[theorem]{Claim}
\theoremstyle{remark}
\newtheorem*{remark*}{Remark}
\newtheorem{remark}[theorem]{Remark}
\theoremstyle{definition}
\newtheorem*{definition*}{Definition}
\newtheorem{definition}[theorem]{Definition}
\numberwithin{theorem}{section}
\newtheorem*{observation*}{Observation}
\newcommand{\R}{\mathbb{R}}
\newcommand{\Z}{\mathbb{Z}}
\newcommand{\Q}{\mathbb{Q}}
\newcommand{\F}{\mathbb{F}}
\newcommand{\p}{\mathbf{p}}
\newcommand{\bv}{\mathbf{v}}
\newcommand{\fV}{\mathfrak{V}}
\newcommand{\cL}{\mathcal{L}}
\DeclareMathOperator{\SPAN}{span}
\DeclareMathOperator{\rank}{rank}
\newcommand{\unitvec}{{\mathbf 1}}
\newcommand{\lip}{{\llcorner}}
\newif\ifcmnts
\newcommand{\marrow}{\marginpar{\boldmath$\longleftarrow$}}
\newcommand{\denys}[1]{\ifhmode\newline\fi\marrow
  \textsf{\textcolor{magenta}{\bf
      Denys:} #1\newline}}
\newcommand{\denys}[1]{}
\begin{document}
\title{Volume rigidity and algebraic shifting}
\author{Denys Bulavka}
\address{Department of Applied Mathematics, Faculty of Mathematics and Physics\\ Charles University\\ Prague\\ Czech Republic.}
\email{dbulavka@kam.mff.cuni.cz}
\thanks{D.B. is supported by the project "Grant Schemes at CU" (reg. no. CZ.02.2.69\slash0.0\slash0.0\slash19\textunderscore073\slash0016935), by the grant no. 21-32817S of the Czech Science Foundation (GA\v{C}R) and by Charles University project PRIMUS\slash21\slash SCI\slash014.}
\author{Eran Nevo}
\address{Einstein Institute of Mathematics\\
 Hebrew University\\ Jerusalem~91904\\ Israel.}
\email{nevo@math.huji.ac.il}
\thanks{E.N. is partially supported by the Israel Science Foundation grant ISF-2480/20.}
\author{Yuval Peled}
\address{Einstein Institute of Mathematics\\
 Hebrew University\\ Jerusalem~91904\\ Israel.}
\email{yuval.peled@mail.huji.ac.il}

\begin{abstract}
We study the generic volume rigidity of $(d-1)$-dimensional simplicial complexes in $\R^{d-1}$, and show that the volume rigidity of a complex can be identified in terms of its exterior shifting. In addition, we establish the volume rigidity of triangulations of several $2$-dimensional surfaces and prove that, in all dimensions 
%$d\ge 3$
$>1$, volume rigidity is {\em not} characterized by a corresponding hypergraph sparsity property.
\end{abstract}
\maketitle
\section{Introduction} 
Let $K$ be an $n$-vertex $(d-1)$-dimensional simplicial complex and $\p:V(K)\to\R^{d-1}$ be a generic mapping of its vertices, in the sense that its $(d-1)n$ coordinates are algebraically independent over $\Q$. This paper deals with the infinitesimal version of the following problem: is there a {\em non-trivial} continuous motion of the vertices starting at $\p$ that preserves the volumes of all the $(d-1)$-simplices in $K$? By ``non-trivial" we mean that, for some $(d-1)$-simplex on $V(K)$ that is not in $K$, its volume would change along the motion. 
It is easy to show that the continuous and infinitesimal versions coincide for generic embeddings, as is the case for graph rigidity~\cite{AR1}.

\medskip

\textbf{Volume Rigidity.} 
The signed volume of a $(d-1)$-face $\sigma=\{v_1,\dots ,v_d\} \in K$ with respect to $\p$ is given by the determinant of the $d\times d$ matrix
\begin{equation*}
 M_{\p, \sigma} = 
        \begin{pmatrix}
                 \p(v_1) & \dots & \p(v_d)\\
                1 & \dots &1
        \end{pmatrix}.
\end{equation*}
Observe that , for every $1\le i\le d-1,~1\le j \le d$, the derivative of the signed volume $\det M_{\p,\sigma}$ with respect to the $i$-th coordinate of $\p(v_j)$ is given by the cofactor $C_{i,j} (M_{\p, \sigma})$ --- that is, the determinant of the submatrix obtained by removing the $i$-th row and  $j$-th column multiplied by $(-1)^{i+j}$. 

The {\em volume-rigidity} matrix $\fV(K,\p)$ of the pair $(K,\p)$ is a $(d-1)n\times f_{d-1}(K)$ matrix, where the columns are indexed by the $(d-1)$-faces of $K$, and every vertex is associated with a block of $(d-1)$ rows. The column vector $\bv_\sigma$ corresponding to a $(d-1)$-face $\sigma=\{v_1,\dots ,v_d\} \in K$ is defined by
\[
(\bv_\sigma)_{v_i,j} = C_{i,j}(M_{\p,\sigma})~,~i\in[d],~j\in [d-1],
\]
and $0$ elsewhere. Here $(\bv_\sigma)_{v_i,j}$ denotes the $j$-th coordinate of $\bv_\sigma$ in the block of $v_i$. In words, $\fV$ is the Jacobian of the function $\p\mapsto (\det M_{\p, \sigma})_{\sigma\in K}$, viewing $\p$ as a $(d-1)n$-dimensional vector.

This matrix was introduced in ~\cite[Appendix A]{lubetzky21} along with the description of a {\em trivial} $(d^2-d-1)$-subspace of the left kernel of $\fV(K,\p)$, arising from the volume-preserving transformations of $\R^{d-1}$. Concretely, the trivial subspace consists of all $(d-1)n$-dimensional vectors $z$ obtained by choosing a $(d-1)\times (d-1)$ matrix $A$ whose trace is zero and a vector $u\in\R^{d-1}$, and letting
$
z_v = A\cdot\p(v)+u
$
for every vertex $v$. The following definition suggests itself.
\begin{definition}
An $n$-vertex $(d-1)$-dimensional simplicial complex $K$ is called {\em volume-rigid} if $$\mathrm{rank}(\fV(K,\p)) = (d-1)n - (d^2-d-1),$$ for a generic $\p:V(K)\to\R^{d-1}$.
\end{definition}
\medskip

\textbf{Exterior shifting.} 
Algebraic shifting was introduced by Kalai (see e.g.~\cite{kalai90} and the survey~\cite {Kalai:surveyShifting}) and has been studied extensively in algebraic combinatorics. Here we present a variant of exterior shifting. The standard basis $(e_i)_{i\in [n]}$ of $\R^n$ induces the basis  $(e_{\sigma})_{\sigma \subseteq [n]}$ of its exterior algebra $\bigwedge \R^n$. 
Consider a generic  basis $(f_1,\dots ,f_n)$ of $\R^n$, where without loss of generality we assume that $f_1 = \unitvec \in \R^n$, namely, the other $n^2-n$ coordinates in this basis are algebraically independent over $\Q$. Consider the \emph{exterior face ring} $\bigwedge K = \bigwedge \R^n~/~(e_{\sigma} \colon \sigma\notin K)$, and let $q$ denote the natural quotient map.  Given a partial order $<$ on the power set of $[n]$, define 
\begin{equation}\label{eqn:Delta}
    \Delta^<(K) = \{\sigma \colon q(f_\sigma) \notin \SPAN_{\R} \{q(f_\tau) \colon \tau< \sigma, |\tau|=|\sigma|\}\}.
\end{equation}

Of special importance in our case is the partial order $<_p$  defined by $\sigma=\{\sigma_1<...<\sigma_m\} \le_p \tau=\{\tau_1<...<\tau_{m'}\}$ if 
%either $m<m'$, or 
$m=m'$ and $\sigma_i\le \tau_i,~\forall i\in[m]$. Corollary \ref{cor:shifted} asserts that $\Delta^p(K):= \Delta^{<_p}(K)$ is a shifted simplicial complex independent of the generic choice of $f$. (Note that  $\Delta^p(K)$ may have more faces than  $K$.)

\medskip
\subsection{Main results.}
Our main result is a characterization of volume rigidity in the setting of Kalai's exterior shifting.

\begin{theorem}
\label{thm:main}
Fix $d\ge 3$. An $n$-vertex $(d-1)$-dimensional simplicial complex $K$ is volume-rigid if and only if $\{1,3,4,...,d,n\} \in \Delta^p(K)$.
\end{theorem}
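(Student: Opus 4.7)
The plan is to reduce Theorem~\ref{thm:main} to a matroidal identification between the generic volume-rigidity matroid and an appropriate shifting matroid on the ground set $\binom{[n]}{d}$. The first observation is combinatorial: the $<_p$-principal downset
\[
D := \{\tau\in\binom{[n]}{d} : \tau\le_p\{1,3,4,\ldots,d,n\}\}
\]
has cardinality exactly $(d-1)n-(d^2-d-1)$. Splitting on the unique element $k\in\{2,\ldots,d\}$ absent from $\{\tau_1,\ldots,\tau_{d-1}\}$, there are $n-d$ choices of $\tau_d$ for each $k\in\{2,\ldots,d-1\}$ and $n-d+1$ choices when $k=d$, totaling $(d-2)(n-d)+(n-d+1)=(d-1)n-(d^2-d-1)$. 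Hence $\{1,3,\ldots,d,n\}$ is a natural candidate face: its principal downset has size matching the target rank of volume rigidity.

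Next, I would cast both sides of the biconditional matroidally. Define the generic volume-rigidity matroid $\mathcal{V}_{n,d}$ on $\binom{[n]}{d}$ as the column matroid of $\fV(K_n^{(d-1)},\p)$ for generic $\p$; combining the description of the $(d^2-d-1)$-dimensional trivial left kernel from~\cite{lubetzky21} with the tightness of that kernel on the complete complex, one has $\operatorname{rank}(\mathcal{V}_{n,d})=(d-1)n-(d^2-d-1)$, and therefore $K$ is volume-rigid iff $K_{d-1}$ is a spanning set of $\mathcal{V}_{n,d}$. On the shifting side, the definition of $\Delta^p(K)$ directly exhibits $\Delta^p(K)\cap\binom{[n]}{d}$ as the greedy $<_p$-first basis of the matroid $\mathcal{E}_K$ of linear dependencies among the vectors $q(f_\sigma)$ in $\bigwedge^d K$.

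The heart of the proof is the bridge between $\mathcal{V}_{n,d}$ and $\mathcal{E}_K$. A natural route is induction on $n$: the base case $n=d+1$ reduces to a direct calculation that $\fV(K_{d+1}^{(d-1)},\p)$ has rank $d$ and that $\Delta^p$ of the boundary of the $d$-simplex contains every $d$-subset of $[d+1]$. For the inductive step, adding a new vertex contributes $d-1$ new rows to $\fV$ and a family of new columns indexed by $d$-subsets containing the new vertex; after a generic affine frame is used to obtain a block-triangular decomposition, I would select $d-1$ of those new columns which extend a basis of $\mathcal{V}_{n-1,d}$ and verify in parallel that the same $d$-subsets extend the $<_p$-first basis of $\mathcal{E}$. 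Shiftedness of $\Delta^p$ then yields both directions of the biconditional. The main obstacle is precisely this parallel bookkeeping: the two sides involve qualitatively different algebraic data (Jacobian cofactors versus wedge products), and the trivial $(d^2-d-1)$-dimensional kernel must be tracked simultaneously on both sides. An alternative, potentially shorter route is to express each cofactor $C_{i,j}(M_{\p,\sigma})$ directly as a coefficient in a wedge-product expansion $f_{\sigma_1}\wedge\cdots\wedge f_{\sigma_d}$ in a suitable quotient of $\bigwedge\R^n$, yielding the identification $\mathcal{V}_{n,d}\cong\mathcal{E}$ in a single step and bypassing the induction entirely.
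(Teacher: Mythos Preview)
Your proposal has the right instinct in its ``alternative route,'' but the primary line (induction on $n$) is not carried out, and the matroidal framing is incorrect. The identification $\mathcal{V}_{n,d}\cong\mathcal{E}$ you aim for is false as stated: when $K$ is the complete $(d-1)$-skeleton the quotient $q$ is the identity in degree $d$, so the vectors $q(f_\sigma)=f_\sigma$ are linearly independent and $\mathcal{E}_K$ is the free matroid of rank $\binom{n}{d}$, whereas $\mathcal{V}_{n,d}$ has rank only $(d-1)n-(d^2-d-1)$. Relatedly, $\Delta^p(K)\cap\binom{[n]}{d}$ is not a single ``$<_p$-greedy basis'' of $\mathcal{E}_K$; by Claim~\ref{claim:cup_<_l} it is the \emph{union} of the greedy bases over all linear extensions of $<_p$ and can be strictly larger than any one basis. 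So the problem does not reduce to a matroid isomorphism, and the induction you sketch would have to prove something more delicate than what you formulate.

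The paper in fact executes your alternative route, but with one key ingredient you do not supply. Taking $f_1=\unitvec$ and identifying $\p$ with the generic $f_2,\ldots,f_d$, one defines $\psi:(\R^n)^{d-1}\to\bigwedge^d\R^n$ by $\psi(m_2,\ldots,m_d)=\sum_{i=2}^d f_{[d]\setminus\{i\}}\wedge m_i$; a direct computation then shows the matrix of $q\circ\psi$ is, up to column signs, exactly $\fV(K,\p)^T$. This is your ``express each cofactor as a wedge coefficient.'' The missing step is Lemma~\ref{lem:kernel}: $\mathrm{im}(\psi)$ equals $\SPAN\{f_\tau:\tau\in B\}$ where $B$ is the $<_p$-downset of $\sigma_0=\{1,3,\ldots,d,n\}$, and $\dim\ker\psi=d^2-d-1$. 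Hence $K$ is volume-rigid iff $q$ is injective on $\SPAN\{f_\tau:\tau\in B\}$, and the equivalence with $\sigma_0\in\Delta^p(K)$ then follows from shiftedness of $\Delta^p(K)$. Your cardinality count $|B|=(d-1)n-(d^2-d-1)$ is precisely the rank--nullity check in that lemma, but by itself it does not tell you that the image of $\psi$ is spanned by the $f_\tau$ with $\tau\le_p\sigma_0$ rather than by some other family; without that identification there is no bridge from the cofactor/wedge identity to the $<_p$-condition.
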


In the $2$-dimensional case we are able to derive the volume rigidity of triangulations of the following surfaces.

\begin{corollary}\label{cor:surfaces}
Every triangulation of the $2$-sphere, the torus, the projective plane or the Klein bottle is volume rigid. 
In addition, every triangulation of the $2$-sphere and the torus minus a single triangle is also volume-rigid. In particular, every simplicial disc with a $3$-vertex boundary is \emph{minimally} volume-rigid. 
\end{corollary}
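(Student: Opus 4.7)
The plan is to deduce this corollary from Theorem \ref{thm:main} specialized to $d=3$, which asserts that a $2$-complex $K$ on $n$ vertices is volume-rigid if and only if $\{1,3,n\}\in\Delta^p(K)$. A basic tool will be the monotonicity of $\Delta^p$ under inclusion of complexes: for $K'\subseteq K$ on the same vertex set, the quotient map defining $\bigwedge K$ factors through the one defining $\bigwedge K'$, and hence $\Delta^p(K')\subseteq\Delta^p(K)$. Thus, to show $\{1,3,n\}\in\Delta^p(K)$ it suffices to identify a convenient subcomplex of $K$ on all $n$ vertices whose shift already contains $\{1,3,n\}$.

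For the $2$-sphere I would argue by induction on $n$. The base case $n=4$ is the boundary of the tetrahedron, whose shift contains all of $\binom{[4]}{3}$. For the inductive step, every simplicial $2$-sphere with $n\ge 5$ vertices admits an edge whose contraction yields a simplicial $2$-sphere on $n-1$ vertices; translating this move to shifting should yield a vertex-split lemma that lifts $\{1,3,n-1\}\in\Delta^p(K')$ to $\{1,3,n\}\in\Delta^p(K)$. Establishing this shift-theoretic vertex-split statement is the technical heart of the sphere case.

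For the torus, the projective plane and the Klein bottle I would follow the same scheme but with topology-preserving local moves in place of edge contraction. After reducing to a small canonical triangulation (for instance the minimum $7$-vertex triangulation of the torus, the $6$-vertex triangulation of $\RP^2$, and a small Klein bottle triangulation) I would verify $\{1,3,n\}\in\Delta^p$ directly and then propagate the property to larger triangulations via a shift-theoretic analogue of surface-preserving Pachner moves. The main obstacle I anticipate is that the contraction/expansion theory on higher-genus surfaces is less uniform than on the sphere, so extra combinatorial bookkeeping is required to ensure that every triangulation of these surfaces reduces to a base case through subcomplex-preserving moves.

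Finally, the ``minus a triangle'' statements follow from the main case by a stress (column-dependency) argument: since $\rank(\fV(K,\p))=2n-5$, deleting the column indexed by a triangle $\sigma$ preserves the rank exactly when some right-kernel vector of $\fV(K,\p)$ is nonzero at $\sigma$. For the $2$-sphere the right kernel is one-dimensional and spanned by the fundamental class, which is nonzero on every triangle; for the torus a genericity argument shows that the $5$-dimensional right kernel projects onto every triangle coordinate. The concluding assertion that a simplicial disc with a $3$-vertex boundary is \emph{minimally} volume-rigid is then immediate: such a disc is a $2$-sphere minus a triangle, and its $2n-5$ triangles match the rank of $\fV$.
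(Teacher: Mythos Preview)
Your overall strategy---reduce via edge contraction to finitely many base triangulations, then handle ``minus a triangle'' by a kernel/stress argument---matches the paper's. But there are two genuine gaps.

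First, for the non-sphere surfaces you propose to reduce to \emph{a single} small triangulation (the $7$-vertex torus, the $6$-vertex $\RP^2$, etc.). This fails: edge contraction that preserves the homeomorphism type does not connect all triangulations of a fixed surface to one another. By Barnette--Edelson there are finitely many \emph{irreducible} triangulations, but they are not unique: $2$ for $\RP^2$, $21$ for the torus, and $29$ for the Klein bottle. Any contraction-based induction must terminate at one of these, so you need to verify volume rigidity for \emph{all} of them. The paper does exactly this (by computer), after proving an edge-contraction lemma directly on the volume-rigidity matrix rather than via a shift-theoretic vertex-split statement; the latter you leave as ``the technical heart'' but never establish, and the paper avoids it entirely. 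Invoking Pachner moves instead does not help, since a $1\!\to\!3$ move is a special vertex split but a $2\!\to\!2$ flip is not covered by either your proposed lemma or the paper's.

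Second, your ``minus a triangle'' argument for the torus is incomplete. You assert that a genericity argument shows the $5$-dimensional right kernel of $\fV(K,\p)$ surjects onto every triangle coordinate, but genericity of $\p$ gives no control over which column dependencies occur---those are determined by $K$, not by $\p$. What actually works, and what the paper proves (Lemma~\ref{lem:minus_a_facet}), is that the \emph{fundamental class} of any orientable surface lies in the right kernel: a direct cofactor computation shows $\fV(K,\p)\cdot z=0$ whenever $\partial z=0$. Since the fundamental class is nonzero on every facet, deleting any one triangle preserves the rank. You had this for the sphere; the same argument handles the torus verbatim, and no appeal to the full $5$-dimensional kernel is needed.
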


In the case of the $2$-sphere we give a complete mathematical proof. For the 
other surfaces
%torus
, we reduce  --- via edge contractions \'a la Whiteley~\cite{Whiteley-split} --- to irreducible triangulations, whose volume-rigidity we verify numerically. 

Hypergraph sparsity was introduced by Streinu and Theran~\cite{Streinu-Theran09}, generalizing results on graph sparsity, prominently by White and Whiteley~\cite{Whiteley-Some96} who studied it from a matroid perspective. We say that a $(d-1)$-complex is $(d-1,d^2-d-1)$-sparse 
(resp. tight) 
if every subset $A$ of its vertices of cardinality at least $d$ spans at most $(d-1)|A|-(d^2-d-1)$ simplices of dimensions $d-1$ 
(resp. and equality holds when $A$ equals the entire vertex set). 

Clearly, a vertex subset $A$ spanning more $(d-1)$-simplices induces a non trivial linear dependence between the columns of $\fV(K,\p),$ and it is natural to ask whether this characterizes all the linear dependencies in the volume rigidity matrix. Using Theorem \ref{thm:main}, we show that the answer is negative, 
hence a Laman-type condition for volume-rigidity does not hold true~\footnote{Corollary~\ref{cor:Sparsity-vs-VolRigidity} shows that Prop.1 in the preprint~\cite{ST:wrong} from 2007 is a misstatement.}.

\begin{corollary}\label{cor:Sparsity-vs-VolRigidity}
For every $d\ge 3$, there exists a $(d-1,d^2-d-1)$-tight $(d-1)$-complex 
that is not volume-rigid. 
%whose volume-rigidity matrix $\fV(K,\p)$ has a non-trivial left kernel element, for every generic embedding $\p$. 
\end{corollary}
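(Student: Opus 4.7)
The plan is to apply Theorem~\ref{thm:main} and reduce the problem to exhibiting, for each $d \geq 3$, a $(d-1, d^2-d-1)$-tight $(d-1)$-complex $K$ on $n$ vertices with $\{1, 3, 4, \ldots, d, n\} \notin \Delta^p(K)$. Once such a $K$ is constructed, Theorem~\ref{thm:main} immediately yields both the tightness (by direct combinatorial check) and the failure of volume-rigidity.

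I would look for $K$ among ``double banana''-type constructions, inspired by the classical counterexample to a Laman-type characterization of generic $3$-dimensional graph rigidity. Namely, take two copies of a minimally volume-rigid tight $(d-1)$-complex (which exists by Corollary~\ref{cor:surfaces} for $d=3$, and can be taken as an analogous minimally volume-rigid block in higher dimensions) and glue them along $d-1$ shared vertices, together with a small number of auxiliary $(d-1)$-faces added to attain the tight count. The shared vertices in generic position span a $(d-2)$-dimensional affine hyperplane in $\R^{d-1}$, and the equiaffine stabilizer of these $d-1$ points is a non-trivial $(d-2)$-dimensional group of shears fixing that hyperplane---providing a natural source of a non-trivial infinitesimal volume-preserving motion.

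Tightness of $K$ would be checked by verifying the sparsity bound on every vertex subset; by inclusion-exclusion this reduces to the sparsity of the two constituent pieces, of their intersection, and of subsets containing the auxiliary faces, so a direct case analysis completes the verification. To show $K$ is not volume-rigid I would exhibit an explicit non-trivial element of the left kernel of $\fV(K, \p)$: concretely, the infinitesimal shear fixing the shared vertices, applied to the vertices of only one piece. By construction this preserves the volumes of all $(d-1)$-simplices within either piece (equiaffine on one side, identity on the other) and, once the auxiliary faces are chosen appropriately, also preserves their volumes, yet it is linearly independent of the trivial $(d^2-d-1)$-dimensional subspace. By Theorem~\ref{thm:main}, the existence of this kernel vector modulo the trivial subspace is equivalent to $\{1, 3, \ldots, d, n\} \notin \Delta^p(K)$.

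The main obstacle is the combinatorial design of the auxiliary faces: they must be placed so that both the sparsity bound on every vertex subset and the volume-preservation of each auxiliary face under the shear flex hold. A generic auxiliary face mixing vertices from both pieces would have its volume changed by the shear, so the auxiliary faces must be chosen with care---for instance, by using compensating deletions within a single piece, or by exploiting linear dependencies among the volume-preservation constraints---to balance the tight count against the persistence of the flex. Establishing that such a combinatorial arrangement exists in every dimension $d \geq 3$ is the technical crux of the argument.
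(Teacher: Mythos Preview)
Your proposal is a sketch rather than a proof, and the gap you flag as ``the technical crux'' is real. A dimension count explains the difficulty: if two minimally volume-rigid $(d-1)$-complexes are glued along $k$ generic vertices ($1\le k\le d-1$), the stabilizer of those $k$ points in the equiaffine group of $\R^{d-1}$ has dimension $(d-1)(d-k)-1$, while the facet deficit relative to tightness is $(d^2-d-1)-(d-1)k$; these two quantities are \emph{equal} for every $k$. Hence a generic choice of auxiliary facets imposes exactly as many independent constraints as there are relative flexes, and one should expect all flexes to be killed. For $d=3$ with $k=2$ this is already decisive: a single auxiliary triangle is needed, and a direct check shows that the only triangles whose area is preserved by the shear along the line through the two hinge vertices are those with all three vertices inside one of the two pieces --- but adding any such triangle violates sparsity on that piece's vertex set. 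Your suggested escapes (``compensating deletions'', ``linear dependencies among constraints'') are not carried out, and for $d=3$ the second is vacuous since only one auxiliary face is in play.

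The paper's proof takes an entirely different, algebraic route. It lets $K$ be the $(d-2)$-fold iterated cone over $K_{3,3}$, notes that $K$ is $(d-1,d^2-d-1)$-sparse, and completes it to a basis $K'$ in the sparsity matroid. Using that $\{3,4\}\in\Delta^{\mathrm{lex}}(K_{3,3})\subseteq\Delta^p(K_{3,3})$, together with the facts that $\Delta^p$ commutes with coning and is monotone under inclusion, one obtains $\{1,2,\ldots,d-2,d+1,d+2\}\in\Delta^p(K')$. Since this set is not $\le_p\{1,3,4,\ldots,d,n\}$ and $K'$ is tight, Theorem~\ref{thm:main} forces $K'$ to be non--volume-rigid. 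No geometric flex is ever exhibited. For $d=3$ the single cone over $K_{3,3}$ is itself $(2,5)$-tight on $7$ vertices, so no completion step is needed and one has an explicit small example.
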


\subsection{Relation to previous works.}
The maximal independent sets of columns of $\fV(\binom{[n]}{d},\p)$, for all generic embeddings $\p$, form the bases of the same matroid. For $d=2$ they correspond to spanning trees, namely the bases in the graphic matroid on $\binom{[n]}{2}$. Kalai~\cite{kalai84} introduced for every integer $k\ge 1$ the $k$-hyperconnectivity matroid on $\binom{[n]}{2}$~\footnote{The $k$-hyperconnectivity matroid is derived from an embedding of the vertex set into $\R^k$. Studying higher hyperconnectivity translates to increasing the dimension of the embedding space.}, where $k=1$ corresponds to the graphic matroid, and identified its bases in terms of exterior shifting (w.r.t. the lexicographic order): $G$ is a basis if and only if the edges of $\Delta^{\rm{lex}}(G)$ form the initial segment that ends with $\{k,n\}$, w.r.t. the lex-order.

Here, in Theorem~\ref{thm:main}, rather then increasing the dimension of the embedding space and staying with graphs, we increase also the dimension of the pure complex, by the same number, and characterize the bases of the resulted $d$-volume-rigidity matroid in terms of exterior shifting w.r.t. the partial order $<_p$.  

%One could increase the two dimensions at two different rates as an open problem, see Section \ref{sec:open}.

The fact that $(d-1,d^2-d-1)$-sparse complexes form the independent sets of a matroid on $\binom{[n]}{d}$ was asserted in~\cite{lorea1979matroidal,Whiteley-Some96}. Additional matroidal and algorithmic properties of sparsity matorids were studied by Streinu and Theran in~\cite{Streinu-Theran09,ST10}. By Corollary~\ref{cor:Sparsity-vs-VolRigidity}, the  $(d-1,d^2-d-1)$-sparsity matroid strictly contains the $(d-1)$-volume-rigidity matroid for all $d\ge 3$. It would be interesting to find further combinatorial conditions that once imposed on the bases of the sparsity-matroid would give the bases of the volume-rigidity matroid.

The remainder of the paper is organized as follows. In Section \ref{sec:VRDP} we establish the connection between volume rigidity and exterior shifting, and prove Theorem \ref{thm:main}. Afterwards, in Section \ref{sec:local} we investigate the effect of local moves on volume rigidity and prove Corollary \ref{cor:surfaces}. In the following Section \ref{sec:sparse} we prove Corollary \ref{cor:Sparsity-vs-VolRigidity}, and we conclude in Section \ref{sec:open} with some related open problems.

\section{Volume rigidity and $\Delta^p(\cdot)$}\label{sec:VRDP}
This section is devoted to studying the basic properties of the shifted complex $\Delta^p(K)$, and to establishing the connection between $\Delta^p(K)$ and $K$'s volume rigidity.
\subsection*{Basic properties of $\Delta^p(\cdot)$}
We start by briefly exploring some useful properties of the complex $\Delta^p(K)$ that appears in Theorem \ref{thm:main}.
Given a partially ordered set (poset) $(P,<)$ and an element $x\in P$ we denote by $P_{<,x}$ the prefix $\{y\in P~:~ y\le x\}$.
\begin{claim}
Let $(P, <)$ be a poset and $x \in P$, then there exists a linear extension $<_l$ of $<$ such that $P_{<_l,x} = P_{<,x}$.
\end{claim}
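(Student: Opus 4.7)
The plan is to split $P$ into the closed down-set $D := P_{<,x} = \{y : y \le x\}$ and its complement $U := P \setminus D = \{y : y \not\le x\}$, build a linear extension on each piece separately, and then concatenate by placing all of $D$ before all of $U$. Since $x \in D$ and $x$ is the (unique) maximum of $D$ with respect to $<$, in any linear extension of the induced order $<|_D$ the element $x$ will be the top element; therefore the concatenated order will have $x$ immediately preceding the entire block $U$, which gives exactly $P_{<_l,x} = D$ as required.

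First I would make the key observation that no element of $U$ lies below an element of $D$ in the original order: if $y \in U$ and $z \in D$ satisfied $y < z$, then $y < z \le x$ would force $y \le x$, contradicting $y \in U$. This asymmetry is what lets the two linear extensions be glued without destroying monotonicity with respect to $<$.

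Next I would pick any linear extension $<_D$ of $<|_D$ and any linear extension $<_U$ of $<|_U$ (both exist by the standard Szpilrajn extension theorem), and define $<_l$ on $P$ by declaring $a <_l b$ whenever $a \in D$ and $b \in U$, and otherwise using $<_D$ or $<_U$ according to the side each of $a,b$ lies on. Checking that $<_l$ extends $<$ is then a short case analysis: two elements inside $D$ or inside $U$ are handled by $<_D$ and $<_U$ respectively, a relation $y < z$ with $y \in D$ and $z \in U$ already satisfies $y <_l z$ by definition, and the remaining case $y \in U$, $z \in D$ is ruled out by the observation above. Finally, because $x$ is the $<_D$-maximum of $D$ and everything in $U$ is $<_l$-above $x$, the set $\{y : y \le_l x\}$ equals $D = P_{<,x}$.

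I do not expect any significant obstacle here; the only substantive content is the observation that $U$ is an up-set with respect to $<$, after which the construction and the verifications are routine.
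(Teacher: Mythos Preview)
Your proof is correct and follows exactly the same approach as the paper: split $P$ into $P_{<,x}$ and its complement, linearly extend each piece, and concatenate with the first block preceding the second. You simply supply more of the routine verification (the up-set observation and the case analysis) that the paper leaves implicit.
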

\begin{proof}
View the sets $A=P_{<,x}$ and $B=P\setminus P_{<,x}$ as posets with the partial order induced by $<$. Extend each of these posets linearly, and concatenate the extensions such that the elements in $A$ are smaller than those in $B$.
\end{proof}

We will mainly work with the partial order $<_p$ on the power set of $[n]$ and denote the set of its linear extensions by $\cL$. We usually denote an element in $\cL$ by $<_l$ and the corresponding shifted complex by $\Delta^l(K) := \Delta^{<_l}(K)$.

\begin{claim}\label{claim:cup_<_l}
$\Delta^p(K) = \bigcup_{<_l\in \cL} \Delta^l(K)$.
\end{claim}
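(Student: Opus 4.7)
The plan is to prove the two inclusions separately, with the previous claim doing almost all the work in the nontrivial direction.

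For the inclusion $\bigcup_{<_l\in \cL} \Delta^l(K) \subseteq \Delta^p(K)$, I would fix $\sigma \in \Delta^l(K)$ for some linear extension $<_l$ of $<_p$. Since $<_l$ extends $<_p$, any $\tau$ with $\tau <_p \sigma$ also satisfies $\tau <_l \sigma$, so
\[
\{\tau : \tau <_p \sigma,\, |\tau|=|\sigma|\}\ \subseteq\ \{\tau : \tau <_l \sigma,\, |\tau|=|\sigma|\}.
\]
Consequently $\SPAN_\R\{q(f_\tau) : \tau <_p \sigma,\, |\tau|=|\sigma|\}$ is contained in the analogous span with $<_l$, and so $q(f_\sigma)$ not lying in the larger span forces it not to lie in the smaller one either. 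Hence $\sigma \in \Delta^p(K)$.

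For the reverse inclusion $\Delta^p(K) \subseteq \bigcup_{<_l\in \cL} \Delta^l(K)$, I would take $\sigma \in \Delta^p(K)$ and invoke the previous claim on the poset $(P, <_p)$, where $P$ is the power set of $[n]$, to produce a linear extension $<_l$ of $<_p$ with $P_{<_l, \sigma} = P_{<_p, \sigma}$. The only subtlety is bridging the fact that $<_p$ compares only equal-sized sets, while $<_l$ compares everything: this is harmless because the definition of $\Delta^{<_l}(K)$ already restricts attention to $\tau$ with $|\tau|=|\sigma|$. Since $P_{<_l,\sigma}$ and $P_{<_p,\sigma}$ agree as sets, the stricter versions also agree, and intersecting with $\{|\tau|=|\sigma|\}$ gives
\[
\{\tau : \tau <_l \sigma,\, |\tau|=|\sigma|\}\ =\ \{\tau : \tau <_p \sigma\}.
\]
Hence the two membership conditions (for $\Delta^l(K)$ and for $\Delta^p(K)$) coincide at $\sigma$, and $\sigma \in \Delta^l(K)$.

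There is no real obstacle here; the argument is a one-line consequence of the preceding claim once one is careful about the size-preserving nature of $<_p$ versus the total character of $<_l$.
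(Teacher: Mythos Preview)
Your proof is correct and follows essentially the same approach as the paper: both directions are argued identically, using containment of predecessor sets for $\supseteq$ and invoking the previous claim to produce a linear extension with matching prefix at $\sigma$ for $\subseteq$. Your added remark about $<_p$ only relating equal-sized sets is a harmless elaboration of a point the paper leaves implicit.
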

\begin{proof}
On the one hand, if $\sigma\in\Delta^l(K)$
for some $<_l\in\cL$ then $q( f_\sigma)$ is not spanned by $B_{l,\sigma}:=\{q(f_\tau)~:~\tau <_l \sigma\}$, which contains the vector set $B_{p,\sigma}$. Therefore, by the definition of $\Delta^<$ in \eqref{eqn:Delta}, we find that $\Delta^p(K)\supseteq\Delta^l(K)$. On the other hand, for every $\sigma\in\Delta^p(K)$, there exists by the previous claim a linear extension $<_l\in\cL$ satisfying $B_{p,\sigma}=B_{l,\sigma}$ hence $\sigma \in \Delta^l(K)$.
\end{proof}

\begin{corollary}\label{cor:shifted}
For every simplicial complex $K$ there holds that $\Delta^p(K)$ is a shifted simplicial complex independent of the choice of the generic basis $f$. In addition, $\Delta^p(K) = K$ if $K$ is shifted. $\square$
\end{corollary}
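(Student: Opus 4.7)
My plan is to derive the three assertions of Corollary \ref{cor:shifted} from Claim \ref{claim:cup_<_l} together with two position-by-position bookkeeping facts about $<_p$ that I would verify at the start: \textbf{(H)} the hereditary property that $\tau <_p \sigma$ with $i \notin \tau \cup \sigma$ implies $\tau \cup \{i\} <_p \sigma \cup \{i\}$, and \textbf{(S)} the statement that for any elementary swap $\sigma' = (\sigma \setminus \{j\}) \cup \{i\}$ with $i < j$ and $i \notin \sigma$, the transposition $\pi = (i,j)$ takes every $\mu <_p \sigma'$ to some $\pi(\mu) <_p \sigma$. Both follow from a case analysis on the positions of $i,j$ in the sorted tuples.

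Using (H), closure of $\Delta^p(K)$ under subsets follows from the standard wedge trick: if $\sigma \in \Delta^p(K)$ and $\rho = \sigma \setminus \{i\}$, then a putative dependency $q(f_\rho) = \sum_{\mu <_p \rho} c_\mu q(f_\mu)$, after wedging with $f_i$, annihilates the $\mu \ni i$ terms and, by (H), writes $q(f_\sigma)$ as a linear combination of $q(f_{\mu \cup \{i\}})$'s with $\mu \cup \{i\} <_p \sigma$, contradicting $\sigma \in \Delta^p(K)$. Independence from the generic $f$ is a standard genericity argument: $\sigma \in \Delta^p_f(K)$ is a fixed polynomial (determinantal rank) condition in the entries of the change-of-basis matrix, so its truth value is constant on the locus of $f$'s with entries algebraically independent over $\Q$. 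With these two in hand, shiftedness reduces to a single elementary swap $\sigma \to \sigma'$: letting $g$ be the generic basis obtained from $f$ by swapping $f_i \leftrightarrow f_j$, one has $q(g_\nu) = \pm q(f_{\pi(\nu)})$ for every $\nu$, and a putative dependency $q(f_{\sigma'}) = \sum_{\mu <_p \sigma'} c_\mu q(f_\mu)$ translates, via (S), into $q(g_\sigma) \in \SPAN\{q(g_\nu) : \nu <_p \sigma\}$, contradicting $\sigma \in \Delta^p_g(K) = \Delta^p_f(K)$.

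For the last assertion, assume $K$ is shifted. The lexicographic order is a linear extension of $<_p$, so Kalai's classical theorem $\Delta^{\mathrm{lex}}(K) = K$ combined with Claim \ref{claim:cup_<_l} gives $K \subseteq \Delta^p(K)$. For the reverse inclusion, I would prove the key lemma that for every $\sigma \notin K$ of size $k$, one has $q(f_\sigma) \in \SPAN\{q(f_\tau) : \tau \in K_{k-1},~\tau <_p \sigma\}$. Granted the lemma, a greedy analysis under any linear extension $<_l$ of $<_p$ shows $\Delta^l(K) = K$ (since by the time a $\sigma \notin K$ is processed, all of its $<_p$-predecessors in $K_{k-1}$ have been added), and then $\Delta^p(K) = K$ by Claim \ref{claim:cup_<_l}.

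The main obstacle I anticipate is precisely this lemma. Since $K$ is shifted, $\{q(f_\tau)\}_{\tau \in K_{k-1}}$ is a basis of $(\bigwedge K)_k$ (a standard Kalai result), so we may write $q(f_\sigma) = \sum_{\tau \in K_{k-1}} c_\tau q(f_\tau)$; shiftedness of $K$ immediately rules out $\tau \geq_p \sigma$ in $K$ (else $\sigma \in K$), so only the $\tau$'s that are $<_p$-incomparable to $\sigma$ need to be eliminated. Computing $c_\tau$ via Cramer's rule on the change-of-basis matrix with entries $[f_\tau]_\rho = \det F[\rho,\tau]$, one expects a Plücker/Laplace column dependence that forces the corresponding minor to vanish; pinning this identity down in full generality is the delicate technical step.
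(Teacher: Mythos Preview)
Your argument is essentially correct but considerably more laborious than the paper's. The paper dispatches the entire corollary in two sentences: downward closure is proved exactly as for $\Delta^{\mathrm{lex}}$ (your wedge trick via (H) is that proof), and everything else---shiftedness, independence of the generic $f$, and the fixed-point property $\Delta^p(K)=K$ for shifted $K$---is read off from the decomposition $\Delta^p(K)=\bigcup_{<_l\in\cL}\Delta^l(K)$ of Claim~\ref{claim:cup_<_l} together with Kalai's remark \cite[p.~58]{kalai90} that all the basic properties of exterior shifting hold for \emph{every} linear extension of $<_p$, not just lex. Since each $\Delta^l(K)$ is shifted, independent of generic $f$, and equal to $K$ when $K$ is shifted, the same is true of their union.

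Your direct proofs of shiftedness (via the transposition trick (S)) and of independence (via the determinantal genericity argument) are fine and in fact unpack what Kalai's remark asserts; they buy self-containment at the cost of length. The real divergence is in the last assertion. The ``key lemma'' you isolate---that for shifted $K$ and $\sigma\notin K$ one has $q(f_\sigma)\in\SPAN\{q(f_\tau):\tau\in K,\,\tau<_p\sigma\}$---is precisely equivalent to the statement $\Delta^l(K)=K$ for all $<_l\in\cL$, which is already part of Kalai's cited result (his proof for lex uses only that lex extends $<_p$). So the Pl\"ucker/Cramer computation you anticipate as the ``delicate technical step'' is unnecessary: you can simply invoke Kalai here, exactly as the paper does, and the obstacle disappears. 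If you insist on a self-contained argument, the cleanest route is not a minor identity but rather to rerun Kalai's original proof that $\Delta^{\mathrm{lex}}(K)=K$ for shifted $K$ and observe that lex is never used beyond its being a linear extension of $<_p$.
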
        
That $\Delta^p(K)$ is downwards closed follows exactly as in the proof for $\Delta^{\mathrm{lex}}(K)$. The rest of
Corollary \ref{cor:shifted} follows immediately from the above decomposition of $\Delta^p(K)$ and the fact that the basic properties of algebraic shifting in ~\cite{kalai90} -- being shifted, and independence the the generic $f$ chosen-- hold in every linear extensions of $<_p$, as remarked in~\cite[p.58]{kalai90}.

\subsection*{Volume rigidity and $\Delta^p(\cdot)$}

We are now ready to prove Theorem \ref{thm:main}. We denote $\sigma_0=\{1,3,...,d,n\}$ and observe that the prefix $B:=\{\tau\le_p\sigma_0~:~|\tau|=d\}$ consists of the subsets $[d]$ and  $[d]\setminus\{i\}\cup\{v\}$ for $
2\le i \le d$ and $d+1\le v\le n$. We define a linear transformation $\psi: \bigoplus_{i=2}^{d}\bigwedge^1 \R^n \rightarrow \bigwedge^d \R^n$ given by $$\psi(m_2,\dots ,m_d)  = \sum_{i=2}^d f_{[d]\setminus \{i\}}\wedge m_i.$$
\begin{lemma}\label{lem:kernel}
The image of $\psi$ is spanned by $\{f_\tau~:~\tau\in B\},$ and its kernel is $(d^2-d-1)$-dimensional.
\end{lemma}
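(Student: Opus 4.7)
The plan is to evaluate $\psi$ on a natural basis of its domain and read off which wedge products appear in $\bigwedge^{d}\R^{n}$. For $i\in\{2,\dots,d\}$ and $v\in[n]$, let $u_{i,v}$ be the element of $\bigoplus_{j=2}^{d}\bigwedge^{1}\R^{n}$ having $f_{v}$ in coordinate $i$ and zero elsewhere; these $(d-1)n$ vectors form a basis. By definition $\psi(u_{i,v})=f_{[d]\setminus\{i\}}\wedge f_{v}$, and a short case split on $v$ gives three possibilities: the wedge vanishes when $v\in[d]\setminus\{i\}$ by antisymmetry (this absorbs the case $v=1$ since $1\in[d]\setminus\{i\}$ for every $i\geq 2$); it equals $\pm f_{[d]}$ when $v=i$; and it equals $\pm f_{[d]\setminus\{i\}\cup\{v\}}$ when $v\geq d+1$.

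Next, I would verify that the indexing sets $[d]$ and $[d]\setminus\{i\}\cup\{v\}$ (for $2\leq i\leq d$ and $d+1\leq v\leq n$) are exactly the elements of $B$ and are pairwise distinct. Unpacking $\tau\leq_{p}\sigma_{0}=\{1,3,4,\dots,d,n\}$ gives $\tau_{1}=1$ and $\tau_{j}\leq j+1$ for $2\leq j\leq d-1$; since $\tau$ is strictly increasing with $\tau_{1}=1$, the sequence $\tau_{j}-j$ is nondecreasing with values in $\{0,1\}$ for $j\leq d-1$. Letting $j^{\ast}$ be the first index where $\tau_{j}-j=1$ (with $j^{\ast}=d$ if no such index exists), one recovers precisely $\tau=[d]$ or $\tau=[d]\setminus\{j^{\ast}\}\cup\{\tau_{d}\}$ with $\tau_{d}\geq d+1$ and $j^{\ast}\in\{2,\dots,d\}$. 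Counting yields $\lvert B\rvert=1+(d-1)(n-d)$.

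Since $\{f_{\tau}:|\tau|=d\}$ is a basis of $\bigwedge^{d}\R^{n}$, the subfamily $\{f_{\tau}:\tau\in B\}$ is linearly independent. Combined with the image computation above, this identifies $\operatorname{im}(\psi)=\SPAN\{f_{\tau}:\tau\in B\}$ and pins down its dimension. Rank-nullity applied to $\psi\colon\bigoplus_{i=2}^{d}\bigwedge^{1}\R^{n}\to\bigwedge^{d}\R^{n}$ then gives the kernel dimension as $(d-1)n-\bigl(1+(d-1)(n-d)\bigr)=d^{2}-d-1$, as required.

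The main obstacle here is purely organizational: carefully matching the wedge outputs of $\psi$ on the basis $\{u_{i,v}\}$ to the explicit description of the prefix $B$. Once that bijection is in hand, the lemma reduces to a clean dimension count, with no further conceptual input needed.
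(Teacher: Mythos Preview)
Your proof is correct and follows the same overall strategy as the paper: identify $\operatorname{im}(\psi)$ with $\SPAN\{f_\tau:\tau\in B\}$ and then apply rank--nullity to the $(d-1)n$-dimensional domain. The one genuine difference is in how the containment $\operatorname{im}(\psi)\subseteq \SPAN\{f_\tau:\tau\in B\}$ is established. You obtain it directly by evaluating $\psi$ on the full basis $\{u_{i,v}\}$ and observing that every image is either zero or $\pm f_\tau$ for some $\tau\in B$; the paper instead only checks the inclusion $\SPAN\{f_\tau:\tau\in B\}\subseteq\operatorname{im}(\psi)$ and then closes the gap from the other side by exhibiting $d^2-d-1$ explicit linearly independent vectors in $\ker\psi$. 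Your route is a bit more economical for the lemma as stated, while the paper's construction has the side benefit of displaying the kernel concretely (these vectors correspond to the trivial volume-preserving motions). Your careful verification of the description of $B$ is additional detail the paper simply asserts before the lemma.
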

\begin{proof}
The fact that $f_\tau\in\mathrm{im}(\psi)$ for every $\tau \in B$ can be shown directly. Indeed, $\psi(0,...,0,f_d)=f_{[d]}$ and by taking $m_i=f_v,~m_{i'}=0~\forall i'\ne i$ we have that $\psi(0,...,f_v,...,0)=f_{[d]\setminus\{i\}\cup\{v\}}$ for $2\le i \le d$ and $d+1\le v\le n$. To show that these $1+(n-d)(d-1)$ linearly independent vectors span the image of $\psi$, we will construct $d^2-d-1$ linearly independent vectors in $\ker\psi$ which actually completes the proof by the rank-nullity theorem since $(1+(n-d)(d-1)) + (d^2-d-1) = n(d-1).$

First, for every $2\le i\le d$ and $j\in[d]\setminus \{i\}$ consider the vector defined by setting $m_i=f_j$ and $m_{i'}=0$ for every $i'\ne i$. Then, 
\[
\psi(m_2,...,m_d) = f_{[d]\setminus\{i\}}\wedge f_j = 0
\]
since $j\in[d]\setminus \{i\}$. This amounts to $(d-1)^2$ vectors in $\ker\psi,$ and the remaining $d-2$ are given by vectors of the form $m_i=a_if_i,~2\le i \le d$,
where the scalars $a_2,...,a_d$ satisfy $\sum_{i=2}^{d} (-1)^ia_i = 0$. Indeed,
\[
\psi(m_2,...,m_d) = \sum_{i=2}^d a_i f_{[d]\setminus\{i\}}\wedge f_i = \sum_{i=2}^d a_i(-1)^{d-i}f_{[d]}=0\,.
\]
The linear independence of these $d^2-d-1$ vectors follows directly from the linear independence of $f_1,...,f_d$.
%
%To verify that these vectors span $\ker\psi$, we claim that every non-zero vector $(m_2,...,m_d)$ that is orthogonal to the vectors above does not belong to the kernel. Indeed, up to normalization, such a vector must take the form $m_i = (-1)^if_i + u_i$, where $u_i\in\R^n$ is spanned by $f_{d+1},...,f_n.$ Therefore,
%\[\langle f_{[d]}, \psi(m_2,...,m_d)\rangle = \sum_{i=2}^{d}
%\langle f_{[d]}, (-1)^i f_{[d]\setminus\{i\}}\wedge f_i\rangle
%=(d-1)(-1)^d\ne 0.\]
\end{proof}

\begin{proof}[Proof of Theorem \ref{thm:main}]

Identify the vertices of $K$ with the set $[n]$. 
W.l.o.g. assume that $f_{d-1}(K)\ge (d-1)n-(d^2-d-1)$, as otherwise $K$ is not volume-rigid and $\{1,3,4,...,d,n\} \notin \Delta^p(K)$.
In addition, suppose that the generic embedding $\p:V(K)\to\R^{d-1}$ is obtained from the vectors $f_2,...,f_d$ in the generic basis of $\R^n$ by taking $(f_i)_v=\p(v)_{i-1}$ for every $2\le i \le d$ and $v\in [n]$. %Let $F$ be a $d\times n$ matrix whose rows consist of the entries of $f_2,...,f_d,f_1=\unitvec$. Note that for every $\sigma\subset [n]$, the $d\times d$ matrix $M_{\p,\sigma}$ is obtained as the submatrix of $F$ corresponding to the columns in $\sigma$.

Consider the $(i,v)$-unit vector $e_{i,v}=(m_2,...,m_d)$  in the domain of $\psi$, for  $2\le i \le d$ and $v\in [n]$, defined by $m_i=e_v$ and $m_{i'}=0,~\forall i'\ne i$. Then, 
\begin{align*}
\psi(e_{i,v})&=f_{[d]\setminus\{i\}}\wedge e_v\\&= (-1)^{d-i}f_1\wedge\dots\wedge f_{i-1}\wedge e_v \wedge f_{i+1}\wedge\dots\wedge f_d\\&=
(-1)^{d-i+d-1}f_2\wedge\dots\wedge f_{i-1}\wedge e_v \wedge f_{i+1}\wedge\dots\wedge f_d\wedge f_1.
\end{align*}
Let $\sigma =\{v_1,...,v_d\}\subset [n]$. Clearly, for the inner product on $\bigwedge \R^n$ with orthonormal basis $\{e_\sigma:\ \sigma \subset [n]\}$, we have 
$\langle e_\sigma, \psi(e_{i,v})\rangle = 0$  if $v\notin\sigma.$ Otherwise, by the identification of $\p$ with $f_2,...,f_d$ above and $f_1=\unitvec$, if $v=v_j$ then $\langle e_\sigma, \psi(e_{i,v})\rangle$ is equal to $(-1)^{i-1}$ times the determinant of the matrix that is obtained from $M_{\p,\sigma}$ by replacing  its $(i-1)$-th row with the $j$-th $d$-dimensional all-ones row vector. Consequently, 
$$
\langle e_\sigma, \psi(e_{i,v})\rangle = (-1)^{i-1}C_{i-1,j}(M_{\p,\sigma}).
$$
Thus, by letting $q: \bigwedge \R^n \longrightarrow \bigwedge K$ be the natural quotient map, and by choosing the basis $\{{e}_\sigma: \sigma \in K\}$ for $\bigwedge K$, we find that the $f_{d-1}(K)\times (d-1)n$ matrix representation of $q\circ\psi$ is equal --- up to multiplying some of its columns by $-1$ and reordering them --- to the transpose of the volume-rigidity matrix $\fV(K,\p)$. Therefore, $K$ is volume-rigid if and only if $\dim\ker(q\circ \psi)=d^2-d-1.$ In other words, $K$ is {\em not} volume-rigid if and only if there exists a non-zero $f\in\mathrm{im}(\psi)$ such that $q(f)=0$. By the characterization of $\psi$'s image in Lemma \ref{lem:kernel}, $f$ can be written as a non-trivial linear combination $f=\sum_{\tau\in B}\lambda_\tau f_\tau$. 

To conclude the proof we claim that $q(f)=0$ for some $f\in\SPAN\{f_\tau~:~\tau\in B\}$ if and only if $\sigma_0\notin \Delta^{p}(K)$. Indeed, on one direction, $q(\sum_{\tau\in B}\lambda_\tau f_\tau)=0$ implies that for some $\tau\in B$, $q(f_\tau)$ is a linear combination of its predecessors in $<_p$. By \eqref{eqn:Delta}, $\tau\notin\Delta^p(K)$ and since $\Delta^p(K)$ is shifted then $\sigma_0\notin\Delta^p(K)$. On the other hand, by manipulating the linear combination which asserts that $\sigma_0\notin\Delta^p(K)$, we obtain a non-zero vector $f=\sum_{\tau\in B}\lambda_\tau f_\tau$ satisfying $q(f)=0$.
\end{proof}

\section{Volume rigidity, local moves and homology}\label{sec:local}
%\todo[inline]{Yuval: wouldn't it be nice to also prove volume-rigidity of 2-sphere directly using Thm 1.2, 3-rigidity and Cohen-Macaulay? Eran: it appears in a Remark below. 3-hyp.conn. we need, which has no published reference, and CM preserved by ext.shifting is not trivial, so this proof is not really better.}
We turn to study the effect of local combinatorial moves on volume rigidity. We start by proving a volume-rigidity analog of Whiteley's vertex splitting~\cite{Whiteley-split}, by which he showed that every triangulation of the $2$-sphere has a $3$-rigid $1$-skeleton. 

\begin{lemma}[Edge contraction]\label{lem:contraction}
%[Edge-collapse, a la Whiteley]
  Let $K$ be a pure $(d-1)$-dimensional simplicial complex, $e =\{u,w\} \in K$ such that
   at least $(d-1)$ facets in $K$ contain $e$. Let $K'$ to be the simplicial complex obtained from $K$ by contracting the edge $e$, i.e. by identifying the vertex $u$ with $w$, and removing duplicates. If $K'$ is
  volume rigid then so is $K$.
\end{lemma}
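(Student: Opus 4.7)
The plan is to exhibit a single, non-generic placement $\p$ of $K$ at which $\fV(K,\p)$ attains the maximal possible rank $(d-1)n-(d^2-d-1)$; since matrix rank is lower semicontinuous in the entries, the rank of $\fV(K,\cdot)$ at a generic placement will then be at least as large and hence equal to this value, proving volume rigidity of $K$.

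Starting from a generic $\p'$ witnessing the volume rigidity of $K'$, define $\p$ on $V(K)$ by $\p(v)=\p'(v)$ for $v\notin\{u,w\}$ and $\p(u)=\p(w)=\p'([uw])$, collapsing the edge $e$. Partition the facets of $K$ as $C_0\cup C_u\cup C_w\cup C_{uw}$ according to how many of $u,w$ they contain; by assumption $|C_{uw}|=|\lk_K(e)|\ge d-1$. On the row side, apply the invertible change of basis that replaces each coordinate row of $u$ and the matching coordinate row of $w$ by their sum and difference, leaving all other rows unchanged.

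The coincidence $\p(u)=\p(w)$ produces a clean block structure. For $\sigma\in C_{uw}$, the $u$- and $w$-columns of $M_{\p,\sigma}$ coincide, so every cofactor that retains both vanishes while the cofactors obtained by deleting one of them are negatives of each other; hence $\bv_\sigma$ is supported only on the ``difference'' rows. For $\sigma=\{u\}\cup a\in C_u$ (symmetrically for $C_w$), the matrix $M_{\p,\sigma}$ equals $M_{\p',\{[uw]\}\cup a}$ up to the relabeling $u\leftrightarrow[uw]$. Thus, after the basis change, the restriction of $\bv_\sigma$ to the ``sum'' rows together with the rows of vertices outside $\{u,w\}$ reproduces the column of $\fV(K',\p')$ indexed by $\{[uw]\}\cup a$; in particular, when both $\{u\}\cup a$ and $\{w\}\cup a$ lie in $K$ their restricted columns coincide, matching the single facet in $K'$. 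Columns in $C_0$ agree verbatim with the corresponding $K'$-columns.

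The rank now splits into two independent summands. First, the $(d-1)\times|C_{uw}|$ block formed by the ``difference'' rows of the $C_{uw}$-columns has full row rank $d-1$: each of its columns is, up to signs and a fixed permutation, a normal vector to the affine hyperplane through the $d-1$ points $\p(u),\p(x_1),\dots,\p(x_{d-2})$ for $\tau=\{x_1,\dots,x_{d-2}\}\in\lk_K(e)$, and because $\p'$ is generic, any $d-1$ such hyperplanes---all passing through the common point $\p(u)$---have linearly independent normals spanning $\R^{d-1}$. Using these $d-1$ columns, elementary column operations eliminate the ``difference''-row entries of every other column without perturbing any other row, since $C_{uw}$-columns are confined to those rows. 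What remains on the other rows is exactly a copy of $\fV(K',\p')$ by the identification above, contributing rank $(d-1)(n-1)-(d^2-d-1)$. Summing, $\rank\fV(K,\p)=(d-1)n-(d^2-d-1)$, and semicontinuity closes the proof.

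The main obstacle I anticipate is the rank-$(d-1)$ claim for the $C_{uw}$-block: it requires interpreting the cofactors of $M_{\p,\sigma}$ at the collapsed placement geometrically---as gradients in $\p(u)$ of the signed simplex volumes---and verifying that the hypothesis $|\lk_K(e)|\ge d-1$ combined with genericity of $\p'$ indeed produces $d-1$ linearly independent normal directions. A secondary care-point is the bookkeeping that matches columns of $\fV(K,\p)$ to those of $\fV(K',\p')$ under the contraction and correctly handles duplicate facets.
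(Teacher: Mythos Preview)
Your proposal is correct and follows essentially the same strategy as the paper: degenerate $\p$ so that $\p(u)=\p(w)$, apply a row operation on the $u$- and $w$-blocks to obtain a block-triangular structure, and read off the rank as $(d-1)+\rank\fV(K',\p')$ via semicontinuity. The only cosmetic differences are that the paper adds the $u$-rows to the $w$-rows (rather than forming both sum and difference) and argues the rank of the $C_{uw}$-block by computing its left kernel directly instead of via your normal-vector interpretation; these are equivalent formulations of the same argument.
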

\begin{proof}
Without loss of generality assume that $u<w$ are the first among the $n$ vertices of $K$, as the vertex labels do not effect volume-rigidity. We will construct an auxiliary $(d-1)n\times f_{d-1}(K)$ matrix $A$ such that 
\[
\rank{\fV(K,\p)}\ge \rank(A) = (d-1)n-(d^2-d-1).
\]
First, we replace the position of the vertex $w$, i.e. $\p(w)$, by the position of the vertex $u$, i.e. $\p(u)$. Formally we define a new (non-generic) placement of vertices $\p'$ that coincides with $\p$ on all vertices except $w$ on which we set it to equal to $\p(u)$. Clearly, since $\p$ is generic, there holds $\rank{\fV(K,\p)}\ge\rank{\fV(K,\p')}.$ To obtain $A$, we add the rows 
in $\fV(K,\p')$ corresponding to the vertex $u$ to the rows corresponding to the vertex $w$, an operation that  does not change the rank. 

We first claim that the submatrix of $A$ which corresponds to the columns of the facets $L$ containing $e=\{u,w\}$ is supported on the rows corresponding to $u$. Indeed,
if $e\subseteq \sigma=\{v_1,\ldots,v_d\}\in L$ then for $v_j\in \sigma$ such that $v_j\neq \{u,w\}$ we have that each entry $A_{v_j,i ; \sigma} =C_{i,j}(M_{\p',\sigma}) = 0$ because $M_{\p',\sigma}$ has two identical columns as $\p'(u) = \p'(w)$. On the other hand, because we added the rows corresponding to vertex $u$ to the rows corresponding to vertex $w$, we have that $$A_{w,i;\sigma} = C_{i,1}(M_{\p',\sigma}) + C_{i,2}(M_{\p',\sigma}) = 0.$$ This follows from  our assumption that $u$ and $w$ are the first two vertices hence their cofactors in $M_{\p',\sigma}$ have opposite signs, and they in fact cancel-out since and $\p'(u)=\p'(w)$. 

Second, we claim that the submatrix $A_{u,L}$ of $A$ corresponding to the $d-1$ rows of $u$ and the columns of $L$ has a full rank of $d-1$. We derive this claim by the assumption that $|L|\ge d-1$ and the fact that $(\p'(v)~:v\ne u)$ is generic. Indeed, consider a vector $x\in\R^{d-1}$ in the left kernel of $A_{u,L}$. A brief calculation yields that the orthogonality of $x$ and the column in $A_{u,L}$ corresponding to the facet $\sigma =\{u,w,v_2,...,v_d\}$ is equivalent to $x$ being in the span of $\p'(v_i)-\p'(w),~i=2,...,d$. By the assumption that $\p$ is generic, such $|L|\ge d-1$ constraints are only satisfied by $x=0$ hence 
\begin{equation}\label{eq:AuL}
\rank(A_{u,L})=d-1.
\end{equation}

Third, consider the complement submatrix $A':=A_{\{u\}^c,L^c}$ whose rows correspond to all the vertices except $u$, and columns to all the facets that are not in $L$. We observe that $A'$ contains as a submatrix the generic volume rigidty matrix $\fV(K',\p)$ --- where $\p$ is viewed here as a generic embedding of $V(K')=V(K)\setminus\{u\}$ into $\R^{d-1}$. Indeed, every facet $\sigma$ of $K'$ arises from a facet $\hat\sigma$ of $K$. 
\begin{itemize}
    \item If $u\notin \hat\sigma$ then $\sigma=\hat\sigma$ and the columns in $\fV(K',\p)$ and $A'$ corresponding to $\sigma$ are clearly equal.
    \item Otherwise, if $u\in \hat\sigma$ then $\sigma = \hat\sigma\cup\{w\}\setminus\{u\}$, and by the construction of $A$ --- in which $\p'(u)=\p'(w)$ and the rows of $u$ are added to the rows of $w$ --- we have that the column in $A'$ created from $\hat\sigma$ is equal to the column of $\sigma$ in $\fV(K',\p)$. 
\end{itemize}
 Note that $A'$ may contain some duplicate columns --- in case there are two facets that differ only in the vertices of $e$ --- but, regardless, our observation that $A'$ contains $\fV(K',\p)$ as a submatrix implies that 
\begin{equation}\label{eq:A'}
\rank(A')= (n-1)(d-1)-(d^2-d-1).    
\end{equation}

In conclusion, $A$ takes the form
\[
A=\bordermatrix
{
        ~ & L & L^c \cr
        ~~u & A_{u,L} & *\cr
        \{u\}^c & 0 & A' \cr
}, 
\]
and by combining \eqref{eq:AuL} and \eqref{eq:A'} we find that 
$$\rank(\fV(K,\p)\ge \rank(A)=\rank(A_{u,L})+\rank(A')=n(d-1)-(d^2-d-1)\,,
$$
as claimed.

\end{proof}

The next two lemmas are direct analogs of basic results in graph rigidity~\cite{AR2} asserting that gluing preserves volume-rigidity. We include their proofs for completeness.
\begin{lemma}
  Let $K$ be $(d-1)$-volume-rigid, $v\notin V(K)$ and $S\subseteq V(K)$ such that $|S|\geq d$, then $K \cup (v*\binom{S}{d-1})$ is $(d-1)$-volume-rigid.
\end{lemma}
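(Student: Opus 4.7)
The plan is to extend the generic embedding $\p:V(K)\to\R^{d-1}$ to a generic embedding $\p':V(K')\to\R^{d-1}$ by placing $v$ generically, and to exploit a block-triangular structure of $\fV(K',\p')$. Ordering the rows so that the $(d-1)$-block of $v$ appears last and the columns so that the new facets $\{v\}\cup T$ with $T\in\binom{S}{d-1}$ appear last, the old facets of $K$ (which are disjoint from $v$) contribute zeros in the $v$-block, yielding
\[
\fV(K',\p')=\begin{pmatrix}\fV(K,\p) & A\\ 0 & B\end{pmatrix}
\]
for some matrices $A$ and $B$, where $B$ has $d-1$ rows. The standard rank lower bound for block-triangular matrices gives $\rank\fV(K',\p')\ge\rank\fV(K,\p)+\rank B$. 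Since $K$ is volume-rigid and $B$ has at most $d-1$ rows, it suffices to prove $\rank B=d-1$: this would push $\rank\fV(K',\p')$ up to its maximum $(d-1)(n+1)-(d^2-d-1)$, so that $K'$ is volume-rigid.

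The key observation about $B$ is that its entries, being cofactors of $M_{\p',\{v\}\cup T}$ with $v$'s column removed, do not involve $\p'(v)$; they depend only on the generic positions $\p(s)$ for $s\in S$. After relabeling the vertex set so that $v$ is the largest vertex of $K'$, the vertex $v$ occupies the same position in every new facet, so the sign $(-1)^{i+j_v}$ depends only on the coordinate index $i$. A putative left-null-vector $\alpha\in\R^{d-1}$ of $B$ then translates, up to signs, to the system
\[
\sum_{i=1}^{d-1}\beta_iD_i(T)=0\quad\text{for every }T\in\binom{S}{d-1},
\]
where $D_i(T)$ denotes the minor of $M_{\p,T}$ obtained by deleting its $i$-th coordinate row.

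To finish, I would interpret this system geometrically through the cofactor expansion along a virtual column at $w\in\R^{d-1}$:
\[
\det M_{\p,\{w\}\cup T}=\sum_{i=1}^{d-1}(-1)^{i+j_v}w_iD_i(T)+(-1)^{d+j_v}D(T),
\]
where $D(T)$ is the minor with the all-ones row and $v$-column removed. The choice $w_i=(-1)^{i+j_v}\beta_i$ turns the system into $\det M_{\p,\{w\}\cup T}=\det M_{\p,\{\mathbf{0}\}\cup T}$ for every $T$; equivalently, $w$ lies on the hyperplane through the origin parallel to $\mathrm{aff}\{\p(t):t\in T\}$. Using $|S|\ge d$, I select a $d$-subset $\{s_1,\ldots,s_d\}\subseteq S$ and the $d$ facets $T_k=\{s_1,\ldots,s_d\}\setminus\{s_k\}$; the affine hulls of the $T_k$ are the facets of a non-degenerate simplex in $\R^{d-1}$, and the outward normals of any such simplex span $\R^{d-1}$. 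Hence the common solution of the corresponding $d$ hyperplane equations is only $w=\mathbf{0}$, forcing $\beta=\alpha=0$. The main obstacle I anticipate is precisely this geometric step: verifying that the $d$ facet normals of a generic simplex in $\R^{d-1}$ span $\R^{d-1}$, which ultimately follows from the algebraic independence of the coordinates of $\p$.
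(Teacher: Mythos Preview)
Your proof is correct and follows the same block-triangular decomposition as the paper's proof; the paper simply asserts that the lower-right $(d-1)$-row block $N$ has full rank ``because of general position,'' whereas you supply an explicit geometric justification via the facet normals of a generic simplex. The extra argument is sound but not required---it suffices to note that the determinant of any $(d-1)\times(d-1)$ submatrix of $B$ is a nonvanishing polynomial in the coordinates of $\p$.
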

\begin{proof}
  The vertex $v$ is in at least $d-1$ facets of $L = K \cup (v*\binom{S}{d-1})$. The volume rigidity matrix of $L$ is of the form
  \begin{equation*}
\bordermatrix
{
        ~ & K & S*v \cr
        V & \fV(K,\p) & *\cr
        v & 0 & N \cr
}, 
\end{equation*}
where the matrix $N$ has $d-1$ rows and at least $d-1$ columns. Because of general position $N$ has full rank, i.e. $rank(N) = d-1$. Because $K$ is $(d-1)$-volume-rigid we have that $rank(\fV(K,\p)) = n(d-1) - d(d-1) + 1$. Then, $rank(\fV(L,\p)) = rank(\fV(K,\p)) + rank(N) = (n+1)(d-1) - d(d-1) + 1$ and consequently $L$ is $(d-1)$-volume-rigid. 
\end{proof}

\begin{lemma}[Union of volume-rigid complexes]\label{lem:glue}
Let $K$ and $L$ be $(d-1)$-volume-rigid complexes such that $|V(K)\cap V(L)|\geq d$. Then $K\cup L$ is  $(d-1)$-volume-rigid.
\end{lemma}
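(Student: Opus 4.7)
The plan is to show that the left kernel of $\fV(K\cup L,\p)$ equals its trivial subspace, which has the maximal possible dimension $d^2-d-1$; this gives $\rank \fV(K\cup L,\p) = (d-1)n - (d^2-d-1)$ with $n := |V(K\cup L)|$, hence volume-rigidity of $K\cup L$.

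The first step is the elementary observation that if $z$ lies in the left kernel of $\fV(K\cup L,\p)$ --- viewed as an assignment $v \mapsto z_v\in\R^{d-1}$ --- then its restriction $z|_{V(K)}$ lies in the left kernel of $\fV(K,\p)$, and analogously for $L$. This is because each column of $\fV(K\cup L,\p)$ is indexed by a $(d-1)$-face contained entirely in $K$ or in $L$, with support confined to the rows of that face's vertices. Since $K$ and $L$ are volume-rigid, both restrictions are trivial: there exist traceless $(d-1)\times(d-1)$ matrices $A_K,A_L$ and translation vectors $u_K,u_L\in\R^{d-1}$ such that $z_v = A_K\p(v)+u_K$ for every $v\in V(K)$, and $z_v = A_L\p(v)+u_L$ for every $v\in V(L)$.

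The crux is to match these two affine expressions on the intersection $S := V(K)\cap V(L)$. Equating them for $v\in S$ gives $(A_K-A_L)\p(v) = u_L-u_K$. Since $|S|\geq d$ and $\p$ is generic, there exist $v_0,v_1,\dots,v_{d-1}\in S$ with $\p(v_0),\dots,\p(v_{d-1})$ affinely independent in $\R^{d-1}$. Subtracting the equation at $v_0$ from those at $v_1,\dots,v_{d-1}$ yields $(A_K-A_L)(\p(v_i)-\p(v_0))=0$ for each $i$; the $d-1$ difference vectors form a basis of $\R^{d-1}$, so $A_K=A_L$ and hence $u_K=u_L$. Therefore $z$ is a trivial motion of $K\cup L$, and since trivial motions always belong to the left kernel, we conclude equality.

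I do not foresee any serious obstacle: this is the direct volume-rigidity analog of the classical graph-rigidity gluing lemma~\cite{AR2}, the only substantive ingredient being that $d$ generic points in $\R^{d-1}$ uniquely determine an infinitesimal volume-preserving affine motion on them, which is exactly the affine-independence argument above. One should also implicitly check that the trivial kernel has full dimension $d^2-d-1$ for $K\cup L$, but this is automatic since $V(K\cup L)\supseteq S$ already contains $d$ affinely independent generic points, making the map $(A,u)\mapsto(v\mapsto A\p(v)+u)$ injective on the $(d^2-d-1)$-dimensional parameter space of traceless $A$ and $u\in\R^{d-1}$.
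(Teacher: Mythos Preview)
Your argument is correct. It is, however, a genuinely different proof from the one in the paper. The paper proceeds constructively: since volume-rigidity of $K$ (resp.\ $L$) means its column space coincides with that of the full $(d-1)$-skeleton on $V(K)$ (resp.\ $V(L)$), one may replace $K$ and $L$ by these full skeletons; then, starting from $K$, one adds the vertices of $V(L)\setminus V(K)$ one at a time, each joined by the cone $v*\binom{S}{d-1}$ over $S=V(K)\cap V(L)$, and invokes the preceding vertex-extension lemma at every step. Your route is the direct ``match the trivial motions'' argument: restrict a left-kernel vector to $V(K)$ and to $V(L)$, use rigidity of each piece to get two infinitesimal affine volume-preserving motions, and use $|S|\ge d$ plus genericity to force them to agree. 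Your approach is more self-contained (it does not rely on the vertex-extension lemma) and makes transparent exactly why the threshold $|S|\ge d$ is needed; the paper's approach, on the other hand, packages the gluing as a corollary of vertex extension, which fits the inductive style used elsewhere in the section.
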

\begin{proof}
Because $K$ and $L$ are $(d-1)$-volume-rigid we can assume that each of them has a complete $(d-1)$-skeleton on its respective vertex set. Then $K\cup L$ contains the vertex spanning subcomplex $Q$ obtained from $K$ by adding one vertex $v$ at a time, adding the facets $\binom{S}{d-1} * v$ at step $v$, where $S=V(L)\cap V(K)$. This subcomplex is $(d-1)$-volume-rigid at each step by application of the previous lemma. In particular, $Q$ is $(d-1)$-volume-rigid, hence so is  $K\cup L$.
\end{proof}

\subsection{Proof of Corollary \ref{cor:surfaces}}
\label{sub:cor_surface}
Barnette and Edelson~\cite{Barnette-Edelson-Orientable, Barnette-Edelson-NonOrientable} proved that every compact surface without boundary admits only finitely many \emph{irreducible} triangulations, namely, triangulations where every edge contraction would result in a simplicial complex not homeomorphic to the given surface.
Thus, by Lemma~\ref{lem:contraction}, in order to conclude that for a given surface $S$ every simplicial complex that triangulates it is volume-rigid, it is enough to verify if for the irreducible triangulations of $S$. 
Those are known for the surfaces indicated in Corollary~\ref{cor:surfaces}: one for the $2$-sphere (namely the boundary of a tetrahedron), 
two for the projective plane~\cite{Barnette:RP2}, 
21 for the torus~\cite{Lavrenchenko:Torus} 
and 29 for the Klein bottle~\cite{Lawrencenko-Negami:K, Sulanke:K}. 
Clearly the boundary of the tetrahedron is volume-rigid, and we verified by computer that the irreducible triangulations $K$ of the 
other surfaces mentioned above are volume-rigid -- for this it was enough to find some embedding $\p_K: V(K)\longrightarrow \R^2$ such that $\rank(\fV(K,\p_K)=2|V(K)|-5$. 

\begin{remark}
The fact that every triangulation $K$ of the $2$-sphere is volume-rigid follows also from combining the $3$-hyperconnectivity of its graph
%follows e.g. from Theorem 3.6.1 of Eran's thesis
with the Cohen-Macaulay property. Indeed, the first property says that $\{3,|V(K)|\}\in \Delta^{\mathrm{lex}}(K)$, and as $K$ is Cohen-Macaluay then $\Delta^{\mathrm{lex}}(K)$ is pure and hence $\{1,3,|V(K)|\}\in \Delta^{\mathrm{lex}}(K)$ as $\Delta^{\mathrm{lex}}(K)$ is shifted, which implies, by Claim~\ref{claim:cup_<_l}, that $\{1,3,|V(K)|\}\in \Delta^p(K)$, and we are done by Theorem~\ref{thm:main}.
\end{remark}

To prove the second part of Corollary \ref{cor:surfaces} we are left to show that removing one triangle from a triangulated $2$-sphere or torus preserves volume-rigidity, done next.
A pure simplicial complex is a \emph{minimal cycle} (over some coefficients commutative ring $\F$) if there exists an $\F$-linear combination of its facets whose boundary vanishes, and no proper nonempty subset of its facets has this property. For example, every triangulation of a compact connected surface (resp. and orientable) is a minimal cycle over $\Z_2$ (resp. $\Z$). 

\begin{lemma}\label{lem:minus_a_facet}
If $K$ is a $(d-1)$-dimensional 
volume rigid minimal cycle 
over $\Z$, then $K\setminus \sigma$ is volume rigid for
%some 
every
$\sigma \in K$.
\end{lemma}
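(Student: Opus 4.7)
The plan is first to reduce the statement to the existence of a certain stress. Since $\fV(K\setminus \sigma,\p)$ is obtained from $\fV(K,\p)$ by deleting the single column $\bv_\sigma$, and since $K$ is assumed volume-rigid, it suffices to show that $\bv_\sigma$ lies in the span of the remaining columns, or equivalently that some vector in the right kernel of $\fV(K,\p)$ has a non-zero $\sigma$-coordinate. The $\Z$-minimal-cycle hypothesis supplies a single candidate that should work simultaneously for every $\sigma \in K$: an integer chain $C=\sum_{\tau\in K}c_\tau\, e_\tau\in\bigwedge^d\R^n$ with $\partial C=0$ and $c_\tau\neq 0$ for every $\tau\in K$; the non-vanishing of every coefficient is part of the minimal-cycle assumption, since otherwise $C$ would be a non-zero cycle supported on a proper sub-family of facets.

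The remaining task is to show that this $C$ belongs to the right kernel of $\fV(K,\p)$. For this I would reuse the exterior-algebra framework from the proof of Theorem~\ref{thm:main}: that proof identifies (up to column signs and reordering) the right kernel of $\fV(K,\p)$ with the subspace of $d$-chains supported on $K$ that satisfy
\[
C\lip f_{[d]\setminus\{i\}}=0 \qquad\text{for every } i\in\{2,\ldots,d\},
\]
where $\lip$ denotes the interior product adjoint to $\wedge$, characterized by $\langle C\lip \omega,\eta\rangle=\langle C,\omega\wedge\eta\rangle$.

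The crucial observation is the elementary identity $\partial e_\tau = e_\tau\lip \unitvec$ for every face $\tau$, with $\unitvec=\sum_v e_v=f_1$. This is a short sign computation: for $\tau=\{v_1<\cdots<v_d\}$ one has $e_{v_j}\wedge e_{\tau\setminus v_j}=(-1)^{j-1}e_\tau$, which gives $e_\tau\lip e_{v_j}=(-1)^{j-1}e_{\tau\setminus v_j}$, and summing over $v\in\tau$ recovers the simplicial boundary. Consequently the cycle condition $\partial C=0$ translates into $C\lip f_1=0$. Combined with the adjointness identity $C\lip(\omega\wedge\omega')=(C\lip\omega)\lip\omega'$ and the fact that $f_1$ is a wedge factor of $f_{[d]\setminus\{i\}}$ for every $i\ge 2$, we obtain $C\lip f_{[d]\setminus\{i\}}=0$ for every such $i$ at once, so $C$ is a stress. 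Together with $c_\sigma\neq 0$ for every $\sigma\in K$, the reduction in the first paragraph then finishes the argument. I do not anticipate any genuine obstacle; the only point requiring attention is aligning the sign conventions of $\partial=\cdot\lip\unitvec$ with the column signs relating $\fV(K,\p)$ to $q\circ\psi$ in Section~\ref{sec:VRDP}.
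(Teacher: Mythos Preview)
Your argument is correct and coincides in substance with the paper's general-$d$ proof: both show that a top-dimensional cycle supported on all facets of $K$ lies in the right kernel of $\fV(K,\p)$, so that deleting any single column leaves the rank unchanged. The only difference is packaging: you invoke the identification of $\fV(K,\p)^T$ with $q\circ\psi$ from Section~\ref{sec:VRDP} and use the interior-product identities $\partial C=C\lip f_1$ and $C\lip(f_1\wedge\omega')=(C\lip f_1)\lip\omega'$, whereas the paper performs the equivalent computation by direct cofactor expansion, obtaining the explicit formula~\eqref{eq:bndry} expressing $(\fV(K,\p)\cdot z)_{v,i}$ as a linear combination of the coordinates of $\partial z$. (The paper also gives a separate $d=3$ argument via $\Delta^{\mathrm{lex}}$ and the homology interpretation of shifting, which is genuinely different from your route.)
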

We first give short proof for the special case $d=3$ and conclude the proof of Corollary \ref{cor:surfaces}. Afterwards, we give a more technical proof for the general case.

\begin{proof}[Proof of Lemma \ref{lem:minus_a_facet} ($d=3$).]
As $K$ is a minimal cycle over $\Z$, its $2$-dimensional homology with $\R$-coefficients is one dimensional, and for each facet $\sigma$ of $K$, for $K\setminus \sigma$ this homology vanishes.
By the translation of homology in terms of algebraic shifting, $\Delta^{\rm{lex}}(K)\ni \{2,3,4\} \notin  \Delta^{\rm{lex}}(K\setminus \sigma)$, and as shifting preserves containment we conclude \[\Delta^{\rm{lex}}(K\setminus \sigma)= \Delta^{\rm{lex}}(K)\setminus  \{\{2,3,4\}\}.\]

Note that in this dimension\footnote{For $d>3$, $\{\tau:\ \tau<_p \{1,3,4,\ldots,d,n\}\}$ is smaller than $\{\tau:\ \tau<_\mathrm{lex} \{1,3,4,\ldots,d,n\}\}$.} 
$\tau<_p \{1,3,n\}$ iff $\tau<_\mathrm{lex} \{1,3,n\}$, and thus: 
$\{1,3,n\} \in \Delta^p(K)$ (by Theorem~\ref{thm:main}), hence $\{1,3,n\} \in \Delta^{\mathrm{lex}}(K)$, and by the displayed equality above also
$\{1,3,n\} \in \Delta^{\mathrm{lex}}(K\setminus \sigma)$, so finally
$\{1,3,n\} \in \Delta^p(K\setminus \sigma)$, equivalently, 
$K\setminus \sigma$ is volume-rigid.
\end{proof}

\begin{proof}[Proof of Corollary~\ref{cor:surfaces}]
This is immediate from Lemma~\ref{lem:contraction}, Lemma~\ref{lem:minus_a_facet} for the case $d=3$, and the discussion in the beginning of \S\ref{sub:cor_surface}.
\end{proof}

We conclude this section with a more direct proof of Lemma \ref{lem:minus_a_facet} for all $d\ge 3$.

\begin{proof}[Proof of Lemma \ref{lem:minus_a_facet}]
For two subsets $\sigma$ and $\tau=\sigma\setminus\{v\}$ of $[n]$ that differ by one element, denote $\mathrm{sign}(\tau,\sigma):=(-1)^j$ if $v$ is the $j$-th element in $\sigma.$ We prove the following stronger statement. Let $z\in \Z^{f_{d-1}(K)}$ be a $(d-1)$-dimensional chain in $K$, and $\partial z \in \Z^{f_{d-2}(K)}$ be its boundary, i.e. $(\partial z)_{\tau} = \sum_{\tau \subset \sigma}{\mathrm{sign}(\tau,\sigma)}\cdot z_\sigma.$ Then, for every vertex $v$ and every $i\in[d-1]$
\begin{equation}\label{eq:bndry}
\left(\fV(K,\p)\cdot z\right)_{v,i} = (-1)^{d+i}\sum_{\rho}  {\mathrm{sign}(\rho,\rho\cup\{v\})}\cdot \det N_{\p,\rho,i}\cdot (\partial z)_{\rho\cup\{v\}}\,,    
\end{equation}
where (i) the summation is over the $(d-3)$-faces $\rho$ of $K$ that belong to the link of $v$ and, (ii) the $(d-2)\times(d-2)$ matrix $N_{\p,\rho,i}$ is obtained from the $(d-1)\times (d-2)$ matrix $(\p(v)~:~v\in\rho)$ by removing its $i$-th row. In particular, if $z$ is a generator of the $(d-1)$-homology of a minimal cycle $K$, then $z$ is also a non-trivial linear dependence between the columns of $\fV(K,\p)$. Therefore, removing a $(d-1)$-face from $K$ does not change the rank of the volume rigidity matrix, hence if $K$ is volume rigid then so is $K\setminus\{\sigma\}.$ To derive \eqref{eq:bndry}, note that
\begin{align}
\nonumber\left(\fV(K,\p)\cdot z\right)_{v,i}& = \sum_{v\in \sigma \in K} z_\sigma\cdot C_{i,j}M_{\p,\sigma} \\
\label{eq:det_expand}&=\sum_{\sigma} z_\sigma\cdot(-1)^{i+j}\sum_{j'\in[d]\setminus\{j\}} (-1)^{d-1+j'-\mathbf{1}_{j'>j}}\det N_{\p,\sigma\setminus\{v_j,v_j'\},i}
%&=(-1)^{d+i-1}\sum_{\tau} \det N_{\p,\tau,i} \sum_{\sigma }(-1)^{\mathrm{sign}(\{v\}*\tau,\sigma)}z_\sigma.
\end{align}
Indeed, suppose that $\sigma=\{v_1,...,v_d\}$ and $v=v_j$ and expand the $(i,j)$-th minor of $M_{\p,\sigma}$ by the last row (of ones). Denote $\tau := \sigma\setminus\{v_{j'}\}$ and $\rho:=\tau\setminus\{v_j\}$, and we easily observe that $\mathrm{sign}(\rho,\tau) =(-1)^{j-\mathbf{1}_{j'<j}}.$ Therefore, by changing the order of summation in \eqref{eq:det_expand} we find that
\[
\left(\fV(K,\p)\cdot z\right)_{v,i} = (-1)^{d+i}\sum_{\rho}\mathrm{sign}(\rho,\tau)\det N_{\p,\rho,i}\sum_{ \tau \subset \sigma} \mathrm{sign}(\tau,\sigma)\cdot z_\sigma,
\]
as claimed.
\end{proof}

\section{Volume rigidity and sparsity}\label{sec:sparse}
\begin{proof}[Proof of Corollary~\ref{cor:Sparsity-vs-VolRigidity}]
Let $d\ge 3$ and 
%$K=\sigma^{d-2}*K_{3,3}$, namely 
$K$ be the $(d-1)$-dimensional simplicial complex obtained from the graph $K_{3,3}$ by iterating the cone operation $d-2$
times. Then $K$ is $(d-1,d^2-d-1)$-sparse. (Indeed, $K_{3,3}$ is $(2,3)$-sparse, and if a pure $(k-1)$-dimensional simplicial complex is $(k-1,k^2-k-1)$-sparse then its cone is $(k,(k+1)^2-(k+1)-1)$-sparse.)
Thus, by completing it to a basis in the $(d-1,d^2-d-1)$-sparsity matroid,
we find a basis $K'$ containing $K$, so $K'$ is $(d-1,d^2-d-1)$-tight.

In order to show that $K'$ is not volume-rigid, by Theorem~\ref{thm:main} it is enough to show that \[\{1,2,\ldots,d-2,d+1,d+2\}\in \Delta^p(K'),\] as 
$\{1,2,\ldots,d-2,d+1,d+2\} \nleq_p \{1,3,4,\ldots,d,n\}$ and using tightness. 

The displayed equation above follows from basic properties of this shifting operator, proved in the same way as for exterior shifting w.r.t. the lex-order:
\begin{itemize}
    \item If $K$ is a subcomplex of $K'$ then $\Delta^p(K)\subseteq \Delta^p(K')$.
    \item Cone and $\Delta^p$ commute, namely, if $K=v*L$ for a simplicial complex $L$ then $\Delta^p(K)=1*(\Delta^p(L)+1)$. 
\end{itemize}
Here for a family $F$ of subsets of $[m]$, $F+1:=\{B+1:\ B\in F\}$, and $B+1:=\{i+1:\ i\in B\}$ (so $\emptyset+1=\emptyset$). 
To conclude the proof it is left to note that $\{3,4\}\in \Delta^{\rm{lex}}(K_{3,3})$ and hence, by Claim~\ref{claim:cup_<_l}, also $\{3,4\}\in \Delta^p(K_{3,3})$.
\end{proof}

\section{Concluding remarks}
\label{sec:open}
We end up with some related open problems.
An obvious one is to extend Corollary~\ref{cor:surfaces} to include all surface triangulations.
\begin{conjecture}
\label{conj:surfaces}
Every triangulation of a compact connected surface without boundary, minus a single triangle, is volume-rigid.
\end{conjecture}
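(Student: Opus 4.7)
My plan is to attack the conjecture in two stages: (i) every triangulation $K$ of a closed connected surface is volume-rigid, and (ii) volume-rigidity is preserved upon removing a single triangle. For $d=3$, by Theorem~\ref{thm:main}, these amount to $\{1,3,n\}\in\Delta^p(K)$ and $\{1,3,n\}\in\Delta^p(K\setminus\{\sigma\})$ respectively.

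For stage (i), I would try to prove a volume-rigidity analog of Fogelsanger's theorem: every $(d-1)$-dimensional minimal $\Z_2$-cycle is volume-rigid. Fogelsanger's original strategy is an induction via a carefully chosen vertex-split that decomposes a minimal $\Z_2$-cycle into a union of smaller minimal $\Z_2$-cycles sharing enough vertices. Translated to the present setting, the main ingredients would be Lemma~\ref{lem:contraction} (edge contraction, inverse to vertex-split) and Lemma~\ref{lem:glue} (gluing volume-rigid complexes along $\ge d$ vertices). The base cases would be small irreducible triangulations, handled either via the Cohen--Macaulayness argument (for $S^2$ and $\RP^2$, as in the remark after the proof of Corollary~\ref{cor:surfaces}) or by the direct numerical verification already used for the torus and Klein bottle.

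For stage (ii), the orientable case is immediate from Lemma~\ref{lem:minus_a_facet} once (i) is established, since orientable surface triangulations are minimal $\Z$-cycles. The non-orientable case requires new ideas. One approach is to generalize Lemma~\ref{lem:minus_a_facet} to $\Z_2$-cycles. A more promising angle is to work directly through Theorem~\ref{thm:main}: removing $\sigma$ from $K$ decreases $f_{d-1}$ by one, and hence $\Delta^p(K\setminus\{\sigma\})\subseteq\Delta^p(K)$ loses exactly one facet in dimension two; this lost facet ought to be the generic representative of the top $\Z_2$-homology class of $K$. It then suffices to show that this lost facet is not $\{1,3,n\}$, which I expect to follow from the fact that the $\Z_2$-cycle representative is generically of the form $\{a,b,c\}$ with $a,b,c$ strictly larger than the corresponding coordinates of $\{1,3,n\}$ in $<_p$.

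The main obstacle is the non-orientable case of stage (ii). Generalizing Lemma~\ref{lem:minus_a_facet} to $\Z_2$-cycles is subtle because its proof in Section~\ref{sec:local} uses real-linear combinations of chains, while a $\Z_2$-cycle does not naturally yield a real linear dependence among the columns of $\fV(K,\p)$. A workaround via the orientation double cover $\widetilde K\to K$ is appealing, but requires a careful bridge between $\fV(K,\p)$ and $\fV(\widetilde K,\widetilde\p)$ for compatible embeddings and an exploitation of the deck $\Z_2$-action to descend volume-rigidity. Stage (i) for arbitrarily high genus, while also nontrivial, appears approachable via a Fogelsanger-style induction combined with the local moves in Section~\ref{sec:local}.
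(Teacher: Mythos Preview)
This statement is posed as an open conjecture; the paper does not prove it and, in the paragraph immediately following it, explains precisely why your stage~(i) plan does not go through. Fogelsanger's decomposition produces minimal cycles that in general contain $(d-1)$-faces \emph{not present} in the original complex. For graph $d$-rigidity this is harmless because only the $1$-skeleton matters and the pieces' $1$-skeletons lie inside that of $K$. Volume rigidity, however, is a matroid on the $(d-1)$-faces themselves, so after gluing the pieces via Lemma~\ref{lem:glue} you obtain volume-rigidity of a complex strictly larger than $K'$, and there is no downward monotonicity letting you recover volume-rigidity of $K'$. The authors say exactly this: ``the pieces in the decomposition include triangle faces not existing in the original triangulation, and thus the gluing lemmas we could prove, e.g.\ Lemma~\ref{lem:glue}, are not strong enough.'' Your mention of ``small irreducible triangulations'' as base cases also suggests you are blending Fogelsanger's inductive scheme with the separate reduction-to-irreducibles argument behind Corollary~\ref{cor:surfaces}; the latter handles one fixed surface at a time and gives no uniform statement over all genera.

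Your stage~(ii) argument in the non-orientable case has an independent gap. The operator $\Delta^p$ is defined over $\R$ and detects $\R$-homology; for a non-orientable surface $H_2(K;\R)=0$, so $\{2,3,4\}\notin\Delta^p(K)$ and there is no ``representative of the top $\Z_2$-homology class'' sitting in $\Delta^p(K)$ to play the role of the lost facet. Nothing you wrote rules out that the single $2$-face removed from $\Delta^p(K)$ upon deleting $\sigma$ is $\{1,3,n\}$ itself. The double-cover route is more promising, but the lift $\widetilde\p$ is not generic, and descending the rank of $\fV(\widetilde K,\widetilde\p)$ along the $\Z_2$-action is exactly the kind of step that needs a new idea rather than a citation.
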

The problem we face in applying Fogelsanger's decomposition~\cite{fogelsanger88} (see also~\cite[Sec.3.3]{CJT:newFog}) to volume rigidity of surfaces is that the pieces in the decomposition include triangle faces not existing in the original triangulation, and thus the gluing lemmas we could prove, e.g. Lemma~\ref{lem:glue}, are not strong enough to settle Conjecture~\ref{conj:surfaces}.

It is known that for every triangulation $K$ of the $2$-sphere on $n$ vertices, minus a single triangle, its exterior shifting $\Delta^{\rm{lex}}(K)=\Delta^p(K)$ consists exactly of the triangle $13n$ and all the triangles that are smaller than it in the lex-order, and their subsets.  This is a sufficient condition for volume rigidity by Theorem \ref{thm:main}. The following conjecture deals with  a higher-dimensional counterpart of this fact.

% \todo[inline]{Yuval: How about this?}
\begin{conjecture} For every $d\ge 3$, every triangulation $K$ of the $(d-1)$-sphere minus a single $(d-1)$-simplex is volume rigid.
\end{conjecture}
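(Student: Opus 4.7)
The plan is to reduce the conjecture to a statement about the whole sphere triangulation $K$, then lift from $<_p$-shifting to lex-shifting, and finally to extract the specific face $\{1,3,4,\ldots,d,n\}$ from classical rigidity and Cohen--Macaulay properties. The main difficulty, in dimensions $d\ge 4$, is analogous to the one the authors flag around Conjecture~\ref{conj:surfaces}.

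First I would use that a triangulation $K$ of the orientable manifold $S^{d-1}$ is a minimal cycle over $\Z$ in the sense of Lemma~\ref{lem:minus_a_facet}. Applying equation~\eqref{eq:bndry} to the fundamental class $z$ gives $\fV(K,\p)\cdot z=0$. Since $z$ is supported on \emph{every} $(d-1)$-simplex of $K$, removing any single facet $\sigma$ does not change the rank of the volume-rigidity matrix, and so $K\setminus\sigma$ is volume-rigid if and only if the whole triangulation $K$ is. By Theorem~\ref{thm:main}, this reduces the conjecture to proving $\{1,3,4,\ldots,d,n\}\in\Delta^p(K)$ for every triangulation $K$ of $S^{d-1}$. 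Since the lexicographic order is a linear extension of $<_p$, Claim~\ref{claim:cup_<_l} gives $\Delta^{\mathrm{lex}}(K)\subseteq\Delta^p(K)$, so it actually suffices to prove the stronger inclusion $\{1,3,4,\ldots,d,n\}\in\Delta^{\mathrm{lex}}(K)$.

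Three classical ingredients are at hand. (i) $K$ is Cohen--Macaulay, so $\Delta^{\mathrm{lex}}(K)$ is pure of dimension $d-1$. (ii) $\dim H_{d-1}(K;\R)=1$ translates to $\{2,3,\ldots,d+1\}$ being the unique $(d-1)$-face of $\Delta^{\mathrm{lex}}(K)$ that avoids vertex $1$. (iii) Fogelsanger's theorem~\cite{fogelsanger88} on the $d$-rigidity of the $1$-skeleton of a simplicial $(d-1)$-cycle yields $\{d,n\}\in\Delta^{\mathrm{lex}}(K)$. By (i) and (iii) there is a $(d-1)$-face $\tau=\{t_1<\cdots<t_{d-2}<d<n\}\in\Delta^{\mathrm{lex}}(K)$ with $t_i\in[d-1]$, and unpacking $\tau\ge_p\{1,3,4,\ldots,d,n\}$ forces $t_i=i+1$ for $2\le i\le d-2$ and $t_1\in\{1,2\}$. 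So it suffices to produce a facet of $\Delta^{\mathrm{lex}}(K)$ equal to $\{1,3,4,\ldots,d,n\}$ or $\{2,3,4,\ldots,d,n\}$. For this I would induct on $n$, using edge-contraction (Lemma~\ref{lem:contraction}) whenever $K$ has a vertex of degree $d$ (where the induction straightforwardly yields a smaller sphere triangulation $K'$); for the remaining irreducible cases I would try a Fogelsanger-type handle decomposition of $K$ into volume-rigid pieces to be glued via Lemma~\ref{lem:glue}.

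The main obstacle is precisely this last step. In dimension $d=3$ the single remaining slot of $\tau$ above $\{3,n\}$ must lie in $\{1,2\}$, and the shifted property gives $\{1,3,n\}\in\Delta^{\mathrm{lex}}(K)$ at once. For $d\ge 4$ the $d-2$ free slots in $[d-1]$ can be packed as $\{1,2,\ldots,d-2\}$, producing $\tau=\{1,2,\ldots,d-2,d,n\}$, which is $\le_p$ (not $\ge_p$) the target and hence useless. In higher dimensions there are infinitely many irreducible sphere triangulations, so an edge-contraction induction alone cannot finish the argument; and as the authors note around Conjecture~\ref{conj:surfaces}, even for surfaces, Fogelsanger-style decompositions introduce phantom simplices that make the gluing Lemma~\ref{lem:glue} insufficient. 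A genuinely higher-skeletal analog of Fogelsanger's theorem for sphere triangulations, or a new gluing lemma tolerating such phantom simplices, seems necessary to close the argument.
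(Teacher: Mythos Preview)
The statement you are attempting is a \emph{conjecture} in the paper; the authors do not prove it and explicitly leave it open. So there is no ``paper's own proof'' to compare against. Your reduction via Lemma~\ref{lem:minus_a_facet} and Theorem~\ref{thm:main} to the assertion $\{1,3,4,\ldots,d,n\}\in\Delta^p(K)$ for sphere triangulations is exactly what the paper isolates as the unresolved point, and your honest final paragraph---that graph-level input only buys $\{d,n\}$, which lifts via purity at best to $\{1,2,\ldots,d-2,d,n\}\not\ge_p\{1,3,4,\ldots,d,n\}$ for $d\ge 4$---correctly locates the gap. In short, you have rediscovered why the statement is a conjecture rather than a theorem.

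Two technical comments on your write-up. First, your item (iii) conflates two matroids: Fogelsanger's theorem gives $d$-\emph{rigidity} of the $1$-skeleton, which translates to $\{d,n\}\in\Delta^s(G)$ for \emph{symmetric} shifting, not to $\{d,n\}\in\Delta^{\mathrm{lex}}(K)$. The exterior-shifting statement $\{d,n\}\in\Delta^{\mathrm{lex}}(K)$ is $d$-\emph{hyperconnectivity}, a distinct property; the paper only invokes it for $d=3$ (the $2$-sphere remark). Second, the paper points out that the symmetric-shifting analog $\{1,3,\ldots,d,n\}\in\Delta^s(K)$ \emph{is} known for simplicial spheres, as it is equivalent to the degree-$1$-to-$(d-1)$ hard-Lefschetz isomorphism proved by Adiprasito; but since volume rigidity is governed by exterior (not symmetric) shifting, this does not settle the conjecture. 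Any genuine attack would have to either transfer that Lefschetz-type information to the exterior side or produce the higher-skeletal Fogelsanger-type input you describe.
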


It is also natural to ask whether the stronger property of $\{1,3,4,\ldots,d,n\} \in \Delta^{\rm{lex}}(K)$ holds true. This is known, and tight, for stacked spheres~\cite{Murai:stacked} (also \cite[Example 2.1.8]{Nevo:thesis}).
Let us remark that the conclusion  $\{1,3,4,\ldots,d,n\} \in \Delta^s(K)$ for Kalai's symmetric shifting operator $\Delta^s(\cdot)$ is equivalent to the hard-Lefschetz isomorphism from degree $1$ to degree $d-1$ in a generic Artinian reduction of the Stanley-Reisner ring of $K$ over the field of reals; the later isomorphism was proved recently by Adiprasito~\cite{Adi:beyond}. 
% We know of no example of a simplicial complex $K$ satisfying $\Delta^{\rm{lex}}(K) \ni \{1,3,4,\ldots,d,n\} \notin \Delta^s(K)$, so possibly Conjecture~\ref{conj:extLBC} could provide an alternative to the hard Lefschetz isomorphism above.

%\todo[inline]{Eran: shall we remove the conj below, and discussion following it? it is about the lex-shifting, so not so related to this paper.}
% \begin{conjecture}(The exterior-shifting theoretic Lower Bound Conjecture)\label{conj:extLBC}
% For every $d\ge 3$, and every triangulation $K$ of the $(d-1)$-sphere minus a single $(d-1)$-simplex, there holds $\{1,3,4,\ldots,d,n\} \in \Delta^{\rm{lex}}(K)$.
% \end{conjecture}
% This is known, and tight, for stacked spheres~\cite{Murai:stacked} (also \cite[Example 2.1.8]{Nevo:thesis}).
% Let us remark that the conclusion  $\{1,3,4,\ldots,d,n\} \in \Delta^s(K)$ for Kalai's symmetric shifting operator $\Delta^s(\cdot)$ is equivalent to the hard-Lefschetz isomorphism from degree $1$ to degree $d-1$ in a generic Artinian reduction of the Stanley-Reisner ring of $K$ over the field of reals; the later isomorphism was proved recently by Adiprasito~\cite{Adi:beyond}. 
% We know of no example of a simplicial complex $K$ satisfying $\Delta^{\rm{lex}}(K) \ni \{1,3,4,\ldots,d,n\} \notin \Delta^s(K)$, so possibly Conjecture~\ref{conj:extLBC} could provide an alternative to the hard Lefschetz isomorphism above.

Back to general complexes, 
\begin{problem}\label{prob:CombiChar}
For every dimension, find a combinatorial characterization of the corresponding volume-rigidity matroid.
\end{problem}
The combinatorial characterization problem is important for the $d$-rigidity matroid (and is open for $d\geq 3$). The $d$-rigidity of a graph $G$ on $n$ vertices is equivalent to 
$\{d,n\}\in \Delta^s(G)$. In view of this fact, we ask:
\begin{problem}
Define a version of symmetric shifting $\Delta^{sp}(\cdot)$ and find a matroid on $\binom{[n]}{d}$ such that its bases $K$ are exactly those satisfying $\Delta^{sp}(K)=\{\tau: \tau\le_p \{1,3,4,\ldots,d,n\}\}$. 
\end{problem}

An additional direction to explore is the volume rigidity of a $(d-1)$-dimensional simplicial complex $K$ in $\R^{d'}$  for $d' \ge d-1$. That is, let $\p:V(K)\to\R^{d'}$ be generic, and ask whether there is a non-trivial motion of the vertices that preserves all the volumes of $K$'s $(d-1)$-dimensional simplices in $\R^{d'}$. The case $d=2$ corresponds to  the standard framework rigidity in $\mathbb R^{d'}$, and the case $d'=d-1$ is the volume rigidity notion we study in this paper. Several natural questions on the remaining cases $2<d\le d'$ arise: what are the trivial motions in this setting? Is there a characterization of $(d-1)$-volume rigidity in $\R^{d'}$ in terms of algebraic shifting?

\bibliographystyle{alpha}
\bibliography{main}

\end{document}